\theoremstyle{plain}
\newtheorem{theorem}{Theorem}
\newtheorem{corollary}[theorem]{Corollary}
\newtheorem{lemma}[theorem]{Lemma}
\newtheorem{proposition}[theorem]{Proposition}
\theoremstyle{definition}
\newtheorem{example}[theorem]{Example}
\theoremstyle{remark}
\newtheorem{remark}[theorem]{Remark}
\numberwithin{theorem}{section}
\newcommand{\mSpec}[1]{\mbox{\rm{mSpec}}(#1)}
\newcommand{\Add}{\mbox{\rm{Add\,}}}
\newcommand{\Prod}{\mbox{\rm{Prod\,}}}
\newcommand{\Cog}{\mbox{\rm{Cogen\,}}}
\newcommand{\Ass}[1]{\mbox{\rm{Ass}} \,#1}
\newcommand{\Hom}[3]{\mbox{\rm{Hom}}_{#1}(#2,#3)}
\newcommand{\Ext}[4]{\mbox{\rm{Ext}}^{#1}_{#2}(#3,#4)}
\newcommand{\Tor}[4]{\mbox{\rm{Tor}}_{#1}^{#2}(#3,#4)}
\newcommand{\rmod}[1]{\mbox{\rm{Mod}--}{#1}}
\newcommand{\ModR}{\text{Mod-}R}
\newcommand{\pd}[1]{\mbox{\rm{proj.dim\,}}#1}
\newcommand{\Ker}[1]{\mbox{\rm{Ker}}(#1)}
\newcommand{\Coker}[1]{\mbox{\rm{Coker}}(#1)}
\begin{document}

\title{Test sets for factorization properties of modules}
\author{\textsc{Jan \v{S}aroch and Jan Trlifaj}}
\address{Charles University, Faculty of Mathematics and Physics, Department of Algebra \\
Sokolovsk\'{a} 83, 186 75 Prague 8, Czech Republic}
\email{saroch@karlin.mff.cuni.cz}
\email{trlifaj@karlin.mff.cuni.cz}

\date{\today}
\dedicatory{To L\'{a}szl\'{o} Fuchs in honor of his 95th birthday}
\subjclass[2010]{Primary: 16D40, 03E35. Secondary: 16E30, 16B70, 03E55, 16D10.}
\keywords{projective module, factorization class, cotorsion pair, Dedekind domain, $\lambda$-purity, Weak Diamond Principle, strongly compact cardinal}
\thanks{Research supported by GA\v CR 17-23112S}
\begin{abstract} Baer's Criterion of injectivity implies that injectivity of a module is a factorization property w.r.t.\ a single monomorphism. Using the notion of a cotorsion pair, we study generalizations and dualizations of factorization properties in dependence on the algebraic structure of the underlying ring $R$ and on additional set-theoretic hypotheses. For $R$ commutative noetherian of Krull dimension $0 < d < \infty$, we show that the assertion {\lq}projectivity is a~factorization property w.r.t.\ a single epimorphism{\rq} is independent of ZFC + GCH. We also show that if $R$ is any ring and there exists a strongly compact cardinal $\kappa > |R|$, then the category of all projective modules is accessible.              
\end{abstract}

\maketitle

Various classes of modules studied in homological algebra are defined by factorization properties with respect to proper classes of monomorphisms or epimorphisms. A natural question arises of whether these defining classes of morphisms can be replaced by sets, and hence by single morphisms. In many cases, this is possible. We start with the easier setting of injective modules, and more in general, right-hand classes of complete cotorsion pairs. Then we will turn to the more difficult setting of projective modules, and left-hand classes of cotorsion pairs, where solutions often require extra algebraic and/or set-theoretic assumptions. Our notation will follow \cite{EM} and \cite{GT}.    

\section{Injective modules and their generalizations}

Injective modules and their various generalizations are defined by factorization properties with respect to proper classes of monomorphisms. Denote by $\Psi$ the class of all monomorphisms in $\rmod R$. Then a module $M$ is \emph{injective}, iff $\Hom R{\psi}M$ is surjective for each $\psi \in \Psi$, i.e., iff for each $\psi \in \Psi$ with $\psi \in \Hom RN{N^\prime}$, each homomorphism from $N$ to $M$ factorizes through $\psi$. The classic Baer Criterion for injectivity \cite{B} says that in the definition above, one can replace the proper class $\Psi$ by its subset, and hence by a single element. Namely, if $S$ denotes the set of all right ideals of $R$, then $M$ is injective, iff $\Hom R{\varphi }M$ is surjective, where $\varphi : \bigoplus_{I \in S} I \hookrightarrow \bigoplus_{I \in S} R$. By dimension shifting, it follows that for each $n < \omega$, there is a single monomorhism $\varphi_n$ such that factorization through $\varphi_n$ defines the property of having injective dimension $\leq n$. 

Given a monomorphism $\psi$ in $\rmod R$, we call a class of modules $\mathcal C$ a \emph{factorization class} of $\psi$ provided that $\mathcal C = \{ M \in \rmod R \mid \Hom R{\psi}M \hbox{ is surjective} \}$. So, for example, the class $\mathcal I _n$ of all modules of injective dimension $\leq n$ is the factorization class of $\psi_n$. Using Enochs' proof of the Flat Cover Conjecture (i.e., the fact that the class $\mathcal F _0$ of all flat modules is deconstructible), we infer that there is a monomorphism $\nu$ such that the class $\mathcal {EC}$ of all Enochs cotorsion modules is the factorization class of~$\nu$. 

In order to see that these results are particular instances of a more general phenomenon, we need to recall several definitions:

Let $\mathcal S$ be a class of modules. A module $M$ is \emph{$\mathcal S$-filtered} provided that $M$ is the union of an increasing well-ordered continuous chain of its submodules $( M _\alpha, \nu_{\beta \alpha} \mid \alpha < \beta < \sigma)$ such that $\nu_{\beta \alpha} : M_\alpha \to M_\beta$ is the inclusion for all $\alpha < \beta < \sigma$, $M_0 = 0$, and for each $\alpha < \sigma$, $\Coker{\nu_{\alpha+1 \alpha}}$ is isomorphic to an element of $\mathcal S$. The class of all $\mathcal S$-filtered modules is denoted by $\hbox{Filt}(\mathcal S)$. The class of all direct summands of $\mathcal S$-filtered modules is denoted by $\hbox{sFilt}(\mathcal S)$. Note that $\hbox{Filt}(\hbox{Filt}(\mathcal S)) = \hbox{Filt}(\mathcal S)$ and $\hbox{Filt}(\hbox{sFilt}(\mathcal S)) = \hbox{sFilt}(\mathcal S)$. In particular, both $\hbox{Filt}(\mathcal S)$ and $\hbox{sFilt}(\mathcal S)$ are closed under arbitrary direct sums and extensions.

For example, if $\mathcal S = \{ R \}$, then $\hbox{Filt}(\mathcal S)$ is the class of all free modules, and $\hbox{sFilt}(\mathcal S)$ the class of all projective modules.

A class of modules $\mathcal C$ is \emph{deconstructible} provided that there is a \emph{subset} $\mathcal S \subseteq \mathcal C$ such that $\mathcal C = \hbox{Filt}(\mathcal S)$.     

For a class of modules $\mathcal C$, we define $\mathcal C ^\perp = \{ M \in \rmod R \mid \Ext 1RCM = 0 \hbox{ for all } C \in \mathcal C\}$ and $^\perp \mathcal C  = \{ M \in \rmod R \mid \Ext 1RMC = 0 \hbox{ for all } C \in \mathcal C\}$. A~pair of classes of modules $\mathfrak C = (\mathcal A, \mathcal B)$ is a \emph{cotorsion pair} in case $\mathcal A = {}^\perp \mathcal B$ and $\mathcal B = \mathcal A ^\perp$. $\mathfrak C$ is said to be generated (cogenerated) by a class $\mathcal C$, if $\mathcal B = \mathcal C ^\perp$ ($\mathcal A = {}^\perp \mathcal C$). $\mathfrak C$ is \emph{hereditary}, if $\Ext iRAB = 0$ for all $A \in \mathcal A$, $B \in \mathcal B$ and all $i \geq 2$.      

\begin{lemma}\label{setgen} Let $\mathfrak C = (\mathcal A,\mathcal B)$ be a cotorsion pair in $\rmod R$. Then the following conditions are equivalent:
\begin{enumerate}
\item $\mathfrak C$ is generated by a set.
\item There is a subset $\mathcal S \subseteq \mathcal A$ such that $\mathcal A = \hbox{sFilt}(\mathcal S)$.
\item $\mathcal A$ is deconstructible.
\item There is a monomorphism $f : Q \hookrightarrow P$ with $P$ projective such that $\mathcal B$ is the factorization class of $f$.
\end{enumerate}
\end{lemma}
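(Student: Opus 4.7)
I would prove the four conditions equivalent by going around the cycle $(3)\Rightarrow(2)\Rightarrow(1)\Rightarrow(3)$, together with a separate $(1)\Leftrightarrow(4)$.

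For $(1)\Leftrightarrow(4)$, the pivotal computation is that, for any short exact sequence $0\to Q\to P\to C\to 0$ with $P$ projective, the long exact sequence obtained from $\Hom R{-}M$, together with the vanishing $\Ext 1RPM=0$, shows that $\Hom RfM$ is surjective if and only if $\Ext 1RCM=0$. Hence $(4)\Rightarrow(1)$ follows by setting $C=P/Q$, which gives $\mathcal B=\{C\}^\perp$. For $(1)\Rightarrow(4)$, given a generating set $\mathcal S$ I would pick free presentations $0\to Q_S\to P_S\to S\to 0$ for each $S\in\mathcal S$ and take $f$ to be the direct sum of the inclusions $Q_S\hookrightarrow P_S$; the same computation, applied summand by summand, identifies $\mathcal B=\mathcal S^\perp$ with the factorization class of $f$.

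The step $(3)\Rightarrow(2)$ is immediate: $\hbox{Filt}(\mathcal S)\subseteq\hbox{sFilt}(\mathcal S)\subseteq\mathcal A$, where the last inclusion uses closure of $\mathcal A$ under direct summands. For $(2)\Rightarrow(1)$, Eklof's Lemma yields $\hbox{Filt}(\mathcal S)\subseteq{}^\perp(\mathcal S^\perp)$, and since ${}^\perp(\mathcal S^\perp)$ is itself closed under direct summands, it contains $\hbox{sFilt}(\mathcal S)=\mathcal A$. Combined with the obvious inclusion $\mathcal S\subseteq\mathcal A$ (which forces $\mathcal S^\perp\supseteq\mathcal A^\perp=\mathcal B$), the $\perp$-calculus gives $\mathcal B=\mathcal A^\perp=\mathcal S^\perp$, so $\mathfrak C$ is generated by $\mathcal S$.

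The main obstacle is $(1)\Rightarrow(3)$. My plan is to invoke the Eklof--Trlifaj approximation theorem: after enlarging a generating set $\mathcal S$ to include $R$, every $A\in\mathcal A$ admits a special $\mathcal A$-precover $0\to B\to F\to A\to 0$ with $F\in\hbox{Filt}(\mathcal S)$ and $B\in\mathcal B$. Since $A\in\mathcal A$, the sequence splits, exhibiting $A$ as a summand of an $\mathcal S$-filtered module---which already yields~$(2)$. Upgrading this to a genuine deconstruction $\mathcal A=\hbox{Filt}(\mathcal S')$ for some set $\mathcal S'\subseteq\mathcal A$ is the technical heart of the lemma and relies on the \v{S}\v{t}ov\'{\i}\v{c}ek-type theorem that the left class of any set-generated cotorsion pair is deconstructible.
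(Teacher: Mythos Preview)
Your proposal is correct and follows essentially the same route as the paper: the cycle $(1)\Rightarrow(3)\Rightarrow(2)\Rightarrow(1)$ together with the separate equivalence $(1)\Leftrightarrow(4)$, using the same ingredients (Eklof's Lemma for $(2)\Rightarrow(1)$, closure of $\mathcal A$ under summands for $(3)\Rightarrow(2)$, and the deconstructibility result---which the paper cites as the Kaplansky Theorem for cotorsion pairs, \cite[7.13]{GT}---for $(1)\Rightarrow(3)$). Your detour through special $\mathcal A$-precovers on the way to $(3)$ is redundant once you invoke that theorem, but harmless.
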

\begin{proof} (1) implies (3) by the Kaplansky Theorem for cotorsion pairs \cite[7.13]{GT}. Since $\mathcal A$ is closed under direct summands, (3) implies (2). That (2) implies (1) follows by the Eklof Lemma \cite[6.2]{GT}.  

Assume (1). Then there is a set $\mathcal S \subseteq \mathcal A$ such that $\mathcal B = \mathcal S ^\perp$. For each $A \in \mathcal S$, there is a presentation $0 \to Q_A \overset{\nu_A}\to P_A \to A \to 0$ with $P_A$ projective. We define a monomorphism $f$ by $f = \bigoplus_{A \in \mathcal S} \nu_A$. Then $f \in \Hom RQP$ where $Q = \bigoplus_{A \in \mathcal S} Q_A$ and $P = \bigoplus_{A \in \mathcal S} P_A$ is projective. Moreover, for each $N \in \rmod R$, each homomorphism from $Q$ to $N$ factorizes through $f$, iff $\Ext 1RAN = 0$ for each $A \in \mathcal S$, iff $N \in \mathcal B$. So (4) holds.

Assume (4). Then for each $N \in \rmod R$, $N \in \mathcal B$, iff $\Ext 1R{P/Q}N = 0$. That is, $\mathfrak C$ is generated by the one element set $\{ P/Q \}$, proving (1).
\end{proof}          

For integral domains, other variants of cotorsion modules are studied. Lemma \ref{setgen} applies to them, too:  

\begin{corollary}\label{MatWar} Let $R$ be an integral domain. Then there exist monomorphisms $g_0 \in \Hom R{Q_0}{P_0}$ and $g_1 \in \Hom R{Q_1}{P_1}$ with $P_0$ and $P_1$ projective, such that the class of all Matlis cotorsion (Warfield cotorsion) modules is the factorization class of $g_0$ ($g_1$).
\end{corollary}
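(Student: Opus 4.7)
The plan is to derive both statements from Lemma \ref{setgen}: in each case it suffices to exhibit a set generating the corresponding cotorsion pair, whereupon the implication (1)$\Rightarrow$(4) produces the required monomorphism.

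For the Matlis case, recall that $M$ is Matlis cotorsion iff $\Ext{1}{R}{K}{M} = 0$, where $K$ denotes the quotient field of $R$. The cotorsion pair with right-hand class the Matlis cotorsion modules is therefore generated by the singleton $\{K\}$, and Lemma \ref{setgen}(1)$\Rightarrow$(4), applied to any projective presentation $0 \to Q_0 \overset{g_0}{\to} P_0 \to K \to 0$, immediately yields $g_0$.

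For the Warfield case, $M$ is Warfield cotorsion iff $\Ext{1}{R}{F}{M} = 0$ for every torsion-free $R$-module $F$, so the corresponding cotorsion pair is generated by the proper class $\mathcal{TF}$ of all torsion-free modules. The task is to replace $\mathcal{TF}$ by a subset. I would invoke deconstructibility of $\mathcal{TF}$: fix an infinite cardinal $\kappa \geq |R|$ and let $\mathcal{S}$ be a representative set of torsion-free modules of cardinality $\leq \kappa$; the claim is that every $F \in \mathcal{TF}$ admits a continuous well-ordered filtration by pure submodules with consecutive factors in $\mathcal{S}$. This relies on two standard facts: $\mathcal{TF}$ is closed under both pure submodules and pure quotients (the latter because, in a pure inclusion $F' \subseteq F$ with $F$ torsion-free, solvability in $F'$ of the equation $ax = af$ whenever $af \in F'$ forces $af \in aF'$, so $F/F'$ is torsion-free); and any subset of cardinality $\leq \kappa$ of a module is contained in a pure submodule of cardinality $\leq \kappa$ (cf.\ \cite[Ch.~6]{GT}). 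The Eklof Lemma then gives $\mathcal{S}^\perp = \mathcal{TF}^\perp$, so the cotorsion pair is generated by the set $\mathcal{S}$, and Lemma \ref{setgen}(1)$\Rightarrow$(4) produces $g_1$.

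The hard part is the pure-filtration argument underlying deconstructibility of $\mathcal{TF}$; the Matlis case is essentially immediate because the perpendicular condition there involves only a single test module.
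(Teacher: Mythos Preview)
Your treatment of the Matlis case is exactly the paper's: the cotorsion pair is generated by the singleton $\{Q\}$ and Lemma~\ref{setgen} applies.

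For the Warfield case your argument is correct but differs from the paper's. The paper does not deconstruct the class $\mathcal{TF}$ of torsion-free modules; instead it invokes the characterization \cite[7.47]{GT} that a module is Warfield cotorsion iff it is Matlis cotorsion and of injective dimension $\leq 1$. Since both of these conditions are right-hand classes of cotorsion pairs generated by sets (the first by $\{Q\}$, the second by the set of all right ideals, via dimension shifting of Baer's Criterion as noted in the introduction), their intersection is generated by the union of those sets, and Lemma~\ref{setgen} again yields the monomorphism $g_1$. Your route via deconstructibility of $\mathcal{TF}$ is more self-contained --- it avoids appealing to the structural result \cite[7.47]{GT} --- but in exchange you must carry out the pure-filtration argument (which, as you note, is the standard Hill--type construction and does go through because pure quotients of torsion-free modules over a domain are torsion-free). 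The paper's approach is shorter given the reference, while yours exposes the underlying reason directly and would generalize to any class closed under submodules and pure quotients.
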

\begin{proof} The first claim follows by Lemma \ref{setgen}, since a module $M$ is Matlis cotorsion, iff $\Ext 1RQM = 0$ where $Q$ denotes the quotient field of $R$. For the second claim, one uses the characterization of Warfield cotorsion modules as the Matlis cotorsion modules of injective dimension $\leq 1$, \cite[7.47]{GT}.  
\end{proof}

A different, and more involved, argument shows that there is a single pure monomorphism $\mu$ such that the class of all pure-injective modules is the factorization class of $\mu$:

\begin{lemma}\label{pure} Let $\kappa = |R| + \aleph_0$. Then a module $L$ is pure-injective, iff for each pure embedding $\nu \in \Hom RNM$ with $|M| \leq \kappa$ and each $f \in \Hom RNL$ there exists $g \in \Hom RML$ with $g \nu = f$.
\end{lemma}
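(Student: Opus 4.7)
The forward direction is immediate. For the converse, assume $L$ has the stated small extension property; given a pure embedding $\nu : N \hookrightarrow M$ and $f : N \to L$, we construct an extension $g : M \to L$.

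The plan is to apply Zorn's lemma to the poset $\mathcal P$ of pairs $(N', f')$ with $N \subseteq N' \subseteq M$, $N'$ pure in $M$, and $f' : N' \to L$ extending $f$. Chains have upper bounds: directed unions of pure submodules are pure, and the partial maps glue. Let $(N^*, f^*) \in \mathcal P$ be maximal; it suffices to show $N^* = M$. Suppose for contradiction there is $m \in M \setminus N^*$. The key technical step is then to produce a pure submodule $X \subseteq M$ with $m \in X$, $|X| \leq \kappa$, $X \cap N^*$ pure in $X$, and $N^* + X$ pure in $M$. Given such $X$, the hypothesis applied to the pure embedding $X \cap N^* \hookrightarrow X$ furnishes an extension $h : X \to L$ of $f^*|_{X \cap N^*}$, and $f^{**} : N^* + X \to L$ defined by $f^{**}(n + x) = f^*(n) + h(x)$ is a well-defined extension of $f^*$ lying in $\mathcal P$---contradicting maximality.

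To build $X$, iterate the standard L\"owenheim--Skolem procedure $\omega$ times: take $X_0$ to be a pure hull of $\{m\}$ in $M$ of size $\leq \kappa$, and pass from $X_n$ to $X_{n+1}$ by (a) adjoining, for every finite system of linear equations with parameters in $X_n \cap N^*$ having a solution in $X_n$, a solution in $N^*$ (available by purity of $N^*$ in $M$); (b) adjoining, for every pp-formula and every finite tuple from $X_n$, an element of $N^*$ serving as a ``splitting witness'' for systems with parameters in $N^* + X_n$ that are solvable in $M$; and (c) closing under a pure hull in $M$. Since the parametrizing data in (a) and (b) is of cardinality $\leq \kappa$, each $X_{n+1}$ has size $\leq \kappa$. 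In the limit, (a) gives $X \cap N^*$ pure in $X$; (b) gives $N^* + X$ pure in $M$, because any system with parameters $\bar a + \bar x \in N^* + X$ solvable in $M$ then admits, via a witness $y \in N^* \cap X$, a split into an $N^*$-subsystem (solvable in $N^*$ by purity) and an $X$-subsystem (solvable in $X$ by purity).

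The principal obstacle is (b): the sum of two pure submodules of $M$ need not be pure, even when their intersection is pure in both. For example, the submodules of $\mathbb Z^2$ generated by $(1,0)$ and $(1,2)$ respectively are both pure with trivial intersection, yet their sum equals $\mathbb Z \oplus 2\mathbb Z$, which is not pure in $\mathbb Z^2$. The splitting witnesses in (b) are designed precisely to repair this failure; the crucial observation is that they can be indexed by $\leq \kappa$-many pp-formulas and finite tuples from $X_n$, keeping the iteration within the size bound $\kappa$.
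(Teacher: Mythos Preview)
Your proof is correct but takes a genuinely different route from the paper's. The paper invokes the characterization of pure-injectivity via algebraic compactness (\cite[Theorem~V.1.2]{EM}): it suffices that every finitely solvable system $\mathcal S$ of $\leq \kappa$ linear equations with parameters in $L$ has a global solution in $L$. Given such $\mathcal S$, they choose a small submodule $N \leq L$ containing the parameters and witnesses for all finite subsystems, form $M = (N \oplus F)/K$ freely presenting $\mathcal S$ over $N$, observe that the canonical map $N \hookrightarrow M$ is pure with $|M| \leq \kappa$, apply the hypothesis \emph{once}, and push the tautological solution in $M$ forward to $L$.

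Your argument is a direct Zorn/L\"owenheim--Skolem reduction to small pure embeddings, bypassing the compactness criterion entirely. This is more self-contained and would transport to purity-like settings where an analogue of algebraic compactness is not available, but it is substantially longer, and the delicate point---arranging $N^* + X$ pure in $M$, your step (b)---requires genuine work that the paper's route sidesteps. Your description of (b) is somewhat telegraphic; the precise content is that for each pp-formula $\phi$ and each finite tuple $\bar x$ from $X_n$, one adjoins to $X_{n+1}$ some tuple $\bar y \in N^*$ with $M \models \phi(\bar x - \bar y)$ whenever one exists. Then for any $\bar a \in N^*$ with $M \models \phi(\bar a + \bar x)$, such a $\bar y$ does exist (take $\bar y = -\bar a$), so a witness $\bar y_0 \in N^* \cap X$ was added; since $\phi(M)$ is a subgroup, $M \models \phi(\bar a + \bar y_0)$ and $M \models \phi(\bar x - \bar y_0)$, and purity of $N^*$ and of $X$ in $M$ supplies the two halves of a solution in $N^* + X$. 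The size bound holds because there are only $\leq \kappa$ pp-formulas over $R$.
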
 
\begin{proof} The only-if part is trivial, let us prove the if part. By \cite[Theorem~V.1.2]{EM}, it is enough to show that each system $\mathcal S$ consisting of $R$-linear equations (in any number, finite or infinite, of variables) with parameters in $L$ has a solution in $L$ provided that $|\mathcal S|\leq\kappa$ and each finite $\mathcal F\subseteq\mathcal S$ has a solution in $L$.

Let $\mathcal S$ be such a system. Fix a submodule $N\leq L$ of cardinality $\leq\kappa$ such that $N$ contains all the parameters from $\mathcal S$ and each finite $\mathcal F\subseteq \mathcal S$ has a solution in $N$. Let us denote by $f$ the inclusion $N\subseteq L$. Following the proof of \cite[Theorem~V.1.2\ $(2)\Rightarrow (3)$]{EM}, we put $M = (N\oplus F)/K$ where $F$ is the free modules whose basis is the set of variables in the system $\mathcal S$, and $K$ is the submodule generated by all the elements $(-b,\sum_{j=1}^m y_j r_j)$ where $\sum_{j=1}^m y_j r_j = b$ belongs to $\mathcal S$. Let $\iota: N\hookrightarrow M$ be the canonical embedding. Using finite solvability of $\mathcal S$ in $N$, it is straightforward to check that $\iota$ is actually a pure embedding. Since $|M|\leq \kappa$, we can use our assumption to find $g \in \Hom RML$ such that $g \iota = f$. By construction, $\mathcal S$ has a~solution in $M$. The $g$-image of this solution is a desired solution of $\mathcal S$ in $L$.
\end{proof}

If we consider $\lambda$-purity and $\lambda$-pure-injective modules for a regular uncountable cardinal $\lambda$, the situation is rather unclear. The difficulty here stems from the fact that $\ModR$ often does not possess enough $\lambda$-pure-injective modules. Recall that a~$\lambda$-accessible category $\mathcal A$ \emph{has enough $\lambda$-pure-injective objects} if each object $A$ in $\mathcal A$ can be $\lambda$-purely embedded into a $\lambda$-pure-injective object. For the definition of $\lambda$-purity in this generality, we refer to \cite[2.27]{AR}.

For our purposes, a monomorphism $f:B\to A$ is \emph{$\lambda$-pure} if the homomorphism $\Hom RCA\to \Hom RC{\Coker f}$ induced by the canonical projection $A\to \Coker f$ is surjective for each $<\lambda$-presented module $C$. A module is called \emph{$\lambda$-pure-injective} if it is injective with respect to all $\lambda$-pure monomorphisms. The case $\lambda = \aleph_0$ amounts to the classical notions of purity/pure-injectivity.

The following proposition shows in particular that, over a non-right perfect ring, the class of $\aleph_1$-pure-injective modules does not precisely recognize $\aleph_1$-pure monomorphisms, i.e., there is a pure and not $\aleph_1$-pure monomorphism $\nu$ such that $\Hom R\nu N$ is surjective for each $\aleph_1$-pure-injective module $N$.

\begin{proposition} \label{p:enough} Let $R$ be a ring which is not right perfect. Then $\ModR$ does not have enough $\aleph_1$-pure-injective objects.
\end{proposition}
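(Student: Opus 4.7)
The plan is to apply Bass's Theorem P: since $R$ is not right perfect, there exists a countably presented flat right $R$-module $F$ that is not projective. Concretely, I would take $F$ to be the classical Bass flat module, realized as the direct limit
\[
F \;=\; \varinjlim\bigl(R\xrightarrow{\cdot a_0} R\xrightarrow{\cdot a_1} R\xrightarrow{\cdot a_2}\cdots\bigr)
\]
for a sequence $(a_n)_{n<\omega}$ in $R$ whose partial products generate a strictly decreasing chain of principal left ideals (witnessing the failure of the DCC on principal left ideals). The aim is to show that this $F$ cannot be $\aleph_1$-purely embedded into any $\aleph_1$-pure-injective module.

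As a warm-up that also substantiates the paragraph preceding the statement, I would fix a free presentation $0\to K\to P\xrightarrow{\pi} F\to 0$ with $P=R^{(\omega)}$. Since $F$ is flat, this is a pure exact sequence; yet the inclusion $\nu\colon K\hookrightarrow P$ is \emph{not} $\aleph_1$-pure. Indeed, testing the definition with the countably presented module $C:=F$ would force $\Hom RFP\to\Hom RFF$ to be surjective, and any preimage of $\mathrm{id}_F$ would split $\pi$, exhibiting $F$ as a direct summand of the free module $P$ and contradicting the choice of $F$.

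For the main claim, I would assume toward a contradiction that there is an $\aleph_1$-pure embedding $\iota\colon F\hookrightarrow L$ with $L$ an $\aleph_1$-pure-injective module. The key structural observation is that any $\aleph_1$-pure short exact sequence $0\to A\to B\to C\to 0$ with $C$ countably presented splits automatically: applying $\aleph_1$-purity with $C$ itself lifts $\mathrm{id}_C$ through $B\to C$. Next I would perform a Löwenheim--Skolem-style reduction, available because $\ModR$ is $\aleph_1$-accessible, to replace $L$ by a countably presented submodule $L_0\subseteq L$ still containing $\iota(F)$ and such that $F\hookrightarrow L_0$ remains $\aleph_1$-pure. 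Since $L_0$ is countably presented, so is $L_0/F$, and the preceding splitting principle forces $F$ to be a direct summand of $L_0$; in particular $F$ inherits $\aleph_1$-pure-injectivity. Iterating the same argument for a suitable $\aleph_1$-pure embedding of $F$ into its pure-injective hull would identify $F$ with this hull, making $F$ pure-injective. The final step uses the Bass relations defining $F$: pure-injectivity of $F$ combined with countable presentability and flatness produces coherent solutions to the system $y_n = y_{n+1}a_n$ contradicting the non-stabilization of the telescope $(a_n)_{n<\omega}$.

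The main obstacle is the Löwenheim--Skolem reduction: producing $L_0$ countably presented such that both $L_0\hookrightarrow L$ and $F\hookrightarrow L_0$ are $\aleph_1$-pure. This requires a careful countable closure of a seed subset of $L$ under the countably many witnesses of $\aleph_1$-purity of $\iota$, which is where the $\aleph_1$-accessibility of $\ModR$ is used in an essential way. The non-right-perfect hypothesis on $R$ enters decisively at the final step, where the infinite Bass telescope is precisely the obstruction that prevents $F$ from being pure-injective, and hence rules out the chain of splittings produced by the preceding reductions.
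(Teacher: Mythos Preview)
Your argument breaks down at the L\"owenheim--Skolem step and the sentence that follows it. Even if you could produce a countably presented $L_0\subseteq L$ containing $\iota(F)$ with $F\hookrightarrow L_0$ still $\aleph_1$-pure, all you would learn is that $F$ is a direct summand of $L_0$; indeed, your own ``splitting principle'' applied with $C=L_0/F$ already forces this. From that it does \emph{not} follow that $F$ is $\aleph_1$-pure-injective: $L_0$ is just some countably presented module with no injectivity property, and nothing you have arranged makes $L_0\hookrightarrow L$ split (its cokernel is typically far from countably presented, so your splitting principle does not apply there). The $\aleph_1$-accessibility of $\ModR$ is of no help either: it expresses $L$ as an $\aleph_1$-directed colimit of countably presented modules, but the structure maps into $L$ are in general neither monomorphisms nor $\aleph_1$-pure, and it does not hand you $\aleph_1$-pure countably presented submodules. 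The subsequent steps are likewise unsupported: the embedding of $F$ into its pure-injective hull is pure but not $\aleph_1$-pure in general, so you cannot ``iterate the same argument''; and the final claim that pure-injectivity of $F$ contradicts the Bass relations is left entirely vague.

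The paper's route is quite different. It shows that the \emph{free} module $R^{(\omega)}$, not $F$, admits no $\aleph_1$-pure embedding into an $\aleph_1$-pure-injective. The key external input is the theorem of Bazzoni--\v{S}\v{t}ov\'{\i}\v{c}ek: flat Mittag-Leffler modules, being $\aleph_1$-projective, are $\aleph_1$-pure-epimorphic images of projectives and hence lie in ${}^\perp\mathcal C$ (with $\mathcal C$ the class of $\aleph_1$-pure-injectives), and then so does the Bass module $M$. Now if $f\colon R^{(\omega)}\to B$ were $\aleph_1$-pure with $B\in\mathcal C$, the vanishing $\Ext 1RMB=0$ would let $f$ factor through $\varepsilon$ in the sequence $0\to R^{(\omega)}\xrightarrow{\varepsilon}R^{(\omega)}\to M\to 0$; since the first factor of an $\aleph_1$-pure monomorphism is again $\aleph_1$-pure, $\varepsilon$ would be $\aleph_1$-pure with countably presented cokernel and hence split, contradicting non-projectivity of $M$. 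This Ext-orthogonality step is precisely the missing idea your reduction was trying to replace.
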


\begin{proof} Let us denote by $\mathcal C$ the class of all $\aleph_1$-pure-injective modules. Since $R$ is not right perfect, there exists a countably presented flat module $M$ which is not projective. This module is the direct limit of a countable directed system $F_0\to F_1\to F_2\to \dotsb$ consisting of finite rank free modules. In particular, we have a pure, albeit not $\aleph_1$-pure, short exact sequence $0\to R^{(\omega)}\overset{\varepsilon}{\to} R^{(\omega)}\to M\to 0$.

Each flat Mittag-Leffler module (being $\aleph_1$-projective, cf.\ \cite[3.19]{GT}) is an $\aleph_1$-pure-epimorphic image of a projective module, whence it belongs to ${}^\perp\mathcal C$. By \cite[Theorem~6]{BS}, $M\in {}^\perp\mathcal C$ as well. It immediately follows that the module $R^{(\omega)}$ cannot be $\aleph_1$-purely embedded into an $\aleph_1$-pure-injective module since this would imply that $\varepsilon$ is $\aleph_1$-pure and thus splits.
\end{proof}

Assuming Axiom of Constructibility (V = L), we can even show that:

\begin{proposition} \label{p:evenmore} (V = L) There does not exist a regular uncountable cardinal $\lambda$ such that $\ModR$ has enough $\lambda$-pure-injective objects, provided that $R$ is non-right perfect.
\end{proposition}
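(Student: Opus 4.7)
The plan is to generalize the witness argument of Proposition~\ref{p:enough} uniformly from $\aleph_1$ to every regular uncountable $\lambda$: for each such $\lambda$, I want to exhibit a module that cannot be $\lambda$-purely embedded into any $\lambda$-pure-injective. The template is the one from Proposition~\ref{p:enough}: find a short exact sequence $0\to K\to F\to M\to 0$ with $F$ free, $M$ non-projective, $M\in{}^\perp\mathcal{C}_\lambda$ (where $\mathcal{C}_\lambda$ is the class of $\lambda$-pure-injectives), and $M$ of small enough presentation so that the identity on a suitable $<\!\lambda$-presented sub-configuration is a test map for $\lambda$-purity. Given such a sequence, a hypothetical $\lambda$-pure embedding $\iota:K\hookrightarrow P$ with $P\in\mathcal{C}_\lambda$ produces, via $\Ext 1RMP=0$ and the pushout along the embedding $\varepsilon$ of $K$ into $F$, an extension $g:F\to P$ of $\iota$; lifting the induced map $\bar g:M\to P/\iota(K)$ along the $\lambda$-pure projection $P\to P/\iota(K)$ will yield a section of the projection $F\to M$, contradicting non-projectivity of $M$.

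For $\lambda=\aleph_1$, Bass's theorem and \cite[Theorem~6]{BS} together deliver the countably presented flat non-projective $M_0$ with $M_0\in{}^\perp\mathcal{C}_{\aleph_1}$, and the sequence $0\to R^{(\omega)}\to R^{(\omega)}\to M_0\to 0$ does the job. For $\lambda>\aleph_1$ the obstacle is that the strict inclusion $\mathcal{C}_{\aleph_1}\subseteq\mathcal{C}_\lambda$ makes $M\in{}^\perp\mathcal{C}_\lambda$ a strictly stronger requirement than $M\in{}^\perp\mathcal{C}_{\aleph_1}$, and neither $M_0$ itself nor any ZFC construction of a $<\!\lambda$-presented non-projective module is known to satisfy it. Here V = L enters: under V = L, Jensen's Diamond $\diamondsuit_\lambda$ holds on every stationary subset of every regular uncountable $\lambda$. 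A $\diamondsuit_\lambda$-guided Shelah--Eklof-style construction will build a suitable module $M$ and its presentation $0\to K\to F\to M\to 0$ in cardinality $\lambda$, where the filtration of $M$ by free pieces is arranged to give the $\Ext 1RM{-}$ vanishing against all of $\mathcal{C}_\lambda$ (via the $\lambda$-analogue of \cite[Theorem~6]{BS}, the key point being that $\lambda$-pure epimorphic images of projectives lie in ${}^\perp\mathcal{C}_\lambda$), while the $\diamondsuit_\lambda$-sequence is exploited at a stationary set of limit stages to engineer, inside $M$, a $<\!\lambda$-presented subconfiguration whose identity map fails to lift to $F$, supplying the non-$\lambda$-purity step needed to run the template.

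The main obstacle is the twin requirement of the construction: securing simultaneously the $\Ext 1RM{-}$ vanishing against the whole class $\mathcal{C}_\lambda$ (a condition that strengthens with $\lambda$, since $\mathcal{C}_{\aleph_1}\subsetneq\mathcal{C}_\lambda$) and the combinatorial obstruction to $\lambda$-purity at a $<\!\lambda$-presented witness inside $M$. Axiom V = L is used essentially through the uniform availability of $\diamondsuit_\lambda$ for every regular uncountable $\lambda$, including the weakly compact ones (which V = L does not exclude), so no case distinction among successor, singular successor, or inaccessible $\lambda$ is required; the same diamond-driven construction, started from the countably presented $M_0$ supplied by Bass's theorem, handles all regular uncountable $\lambda$ uniformly.
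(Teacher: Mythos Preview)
Your template has an internal contradiction that cannot be resolved by the $\diamondsuit_\lambda$-construction you sketch. You want a presentation $0\to K\overset{\varepsilon}{\to} F\to M\to 0$ with $F$ free, $M$ non-projective, and $M\in{}^\perp\mathcal C_\lambda$; the only mechanism you propose for the last condition is that $M$ be a $\lambda$-pure-epimorphic image of a projective (equivalently, $\lambda$-free). But once $M$ is $\lambda$-free, \emph{every} surjection from a free module onto $M$ is a $\lambda$-pure epimorphism: any map from a $<\!\lambda$-presented $C$ into $M$ has $<\!\lambda$-generated image, hence lands in a free submodule $M'\subseteq M$, and the inclusion $M'\hookrightarrow M$ lifts to $F$ by projectivity of $M'$. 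So $\varepsilon$ is automatically $\lambda$-pure, and there is no ``$<\!\lambda$-presented subconfiguration whose identity fails to lift to $F$''---your $\diamondsuit_\lambda$-engineered obstruction cannot exist. Conversely, if you arrange the presentation to be non-$\lambda$-pure, you lose the only argument you have for $M\in{}^\perp\mathcal C_\lambda$. A related problem: your template's final lifting step (lifting $\bar g:M\to P/\iota(K)$ along the $\lambda$-pure projection) needs $M$ to be $<\!\lambda$-presented, yet a $\diamondsuit_\lambda$-construction yields $|M|=\lambda$.

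The paper's proof avoids all of this by never asking for membership in ${}^\perp\mathcal C_\lambda$ against the whole proper class. Instead, it assumes for contradiction that $R^{(\omega)}$ embeds $\lambda$-purely into a \emph{specific} $\lambda$-pure-injective $B$, sets $\kappa=(|B|+\lambda)^+$, and builds (via \cite[Theorem~VII.1.4]{EM}) a $\kappa$-free non-projective module $M$ with a filtration whose consecutive factors on a stationary set $E\subseteq\kappa$ are isomorphic to the \emph{same countably presented} flat non-projective $N$ used in Proposition~\ref{p:enough}. Since $\kappa>|B|$, $\kappa$-freeness gives $M\in{}^\perp B$. Now V = L enters not at level $\lambda$ but at the much larger $\kappa$: the Weak Diamond $\Phi_\kappa(E)$, via Lemma~\ref{l:wdiam}, forces some consecutive factor---hence $N$ itself---into ${}^\perp B$. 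With $N\in{}^\perp B$, the argument of Proposition~\ref{p:enough} runs verbatim: the $\lambda$-pure embedding $R^{(\omega)}\to B$ factors through $\varepsilon$, making $\varepsilon$ $\lambda$-pure; since $N$ is countably (hence $<\!\lambda$-) presented, $\varepsilon$ splits, contradicting the non-projectivity of $N$. The essential idea you are missing is this descent step: use a large almost-free module and Weak Diamond to transfer the ${}^\perp B$ property down to the small fixed $N$, rather than trying to manufacture a new $M$ tailored to each $\lambda$.
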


\begin{proof} For the sake of contradiction, assume that there exists such a $\lambda$. Let $f:R^{(\omega)}\to B$ be a $\lambda$-pure embedding into a $\lambda$-pure-injective module $B$. Put $\kappa = (|B|+\lambda)^+$ and fix a countably presented non-projective flat module $N$ together with a pure short exact sequence $0\to R^{(\omega)}\overset{\varepsilon}{\to} R^{(\omega)}\to N\to 0$. By \cite[Theorem~VII.1.4]{EM}, there exists a $\kappa$-free module $M$ which is not projective. In particular, $M$ is a $\kappa$-pure-epimorphic image of a free module, whence $M\in {}^\perp B$. Moreover, by the proof of \cite[Theorem~VII.1.4]{EM}, there are a stationary subset $E$ of $\kappa$ consisting of ordinals with countable cofinality and a filtration $\mathcal M = (M_\alpha\mid\alpha<\kappa)$ of $M$ consisting of $<\kappa$-generated free modules such that $M_{\nu+1}/M_\nu\cong N$ for each $\nu\in E$.

By Lemma~\ref{l:wdiam} (note that $\Phi_\kappa(E)$ holds under V = L), there exists a $\nu\in E$ such that $N\cong M_{\nu+1}/M_\nu\in {}^\perp B$. In particular, $f$ can be factorized through $\varepsilon$ yielding that $\varepsilon$ is $\lambda$-pure, and thus split, in contradiction with the choice of $N$.
\end{proof}

\begin{remark} \label{r:ZFC} If $\lambda = \aleph_n$ for $0<n<\omega$, then the proof of Proposition~\ref{p:evenmore} can be carried out in ZFC using the tools from \cite[Section~VII.3]{EM}.%; in particular, the construction of the $\lambda$-free module in \cite[Corollary~VII.3.13(6)]{EM}.
\end{remark}

\begin{example} If $R$ is a countable ring and $\mathcal C$ the class of all $\aleph_1$-pure-injective modules, then $^\perp\mathcal C$ is just the class of all flat modules (cf.\ \cite[Theorem~6]{BS}). As a~consequence, $\aleph_1$-pure-injective modules over countable rings are cotorsion.

Suppose that $R$ is a countable von Neumann regular ring which is not semisimple. Then all the classes of cotorsion, pure-injective, $\aleph_1$-pure-injective and injective modules coincide. Subsequently, there exists a single monomorphism $\mu$ such that a module $M$ is $\aleph_1$-pure-injective if and only if $\Hom R{\mu}M$ is surjective. On the other hand, there is strictly more pure monomorphisms than $\aleph_1$-pure monomorphisms. Also, since all cyclic modules are countable, it immediately follows from the Baer test lemma that $\aleph_1$-pure submodules of ($\aleph_1$-pure-)injective modules are just the direct summands. Thus not only $\ModR$ does not have enough $\aleph_1$-pure-injective modules, even more so: no non-injective module $\aleph_1$-purely embeds into an $\aleph_1$-pure-injective module.

Consider now the class $\mathcal T$ of all $\aleph_1$-pure embeddings into projective modules. By \cite[Remark~10.7]{GT}, $\{\Coker\nu\mid \nu\in\mathcal T\}$ is precisely the class of all flat Mittag-Leffler modules, hence (again by \cite[Theorem~6]{BS}) $\mathcal T$ is the test class for ($\aleph_1$-pure-)injectivity. However, we cannot pick a subset $\mathcal S\subset \mathcal T$ with the same property, since for each set $\mathcal F$ of flat Mittag-Leffler modules $\mathcal F^\perp$ contains a non-injective module.
\end{example}

\begin{remark}\label{r:FML} We do not know whether there exists a ring $R$ and an $\aleph_1$-pure-injective $R$-module $M$ which is not cotorsion. If it is the case, we would get $M\in\mathcal D^\perp$ where $\mathcal D$ denotes the class of all flat Mittag-Leffler $R$-modules, disproving the conjecture that $\mathcal D^\perp$ is precisely the class of all cotorsion modules.
\end{remark}

\section{Projective modules and their generalizations}
             
We turn to the dual setting of projective modules and their generalizations. These classes of modules are defined by factorization properties with respect to proper classes of epimorphisms. Here, the problem of replacing a proper class of morphisms by a set is more complex: we will see that its solution may depend on the underlying ring, and on the extension of ZFC that we work in. 

Given an epimorphism $\varphi$ in $\rmod R$, we call a class of modules $\mathcal D$ a \emph{cofactorization class} of $\varphi$ provided that $\mathcal D = \{ N \in \rmod R \mid \Hom RN{\varphi} \hbox{ is surjective} \}$.

Our first aim is to dualize Lemma \ref{setgen}. To this purpose, we need more definitions:

Let $\mathcal S$ be a class of modules. A module $M$ is \emph{$\mathcal S$-cofiltered} provided that $M$ is isomorphic to an inverse limit of a well-ordered continuous inverse system $( M _\alpha, \pi_{\alpha \beta} \mid \alpha < \beta < \sigma)$ such that $\pi_{\alpha \beta} : M_\beta \to M_\alpha$ is an epimorphism for all $\alpha < \beta < \sigma$, $M_0 = 0$, and for each $\alpha < \sigma$, $\Ker{\pi_{\alpha \alpha + 1}}$ is isomorphic to an element of $\mathcal S$. The class of all $\mathcal S$-cofiltered modules is denoted by $\hbox{Cofilt}(\mathcal S)$. The class of all direct summands of $\mathcal S$-cofiltered modules is denoted by $\hbox{sCofilt}(\mathcal S)$.

Note that $\hbox{Cofilt}(\hbox{Cofilt}(\mathcal S)) = \hbox{Cofilt}(\mathcal S)$ and $\hbox{Cofilt}(\hbox{sCofilt}(\mathcal S)) = \hbox{Cofilt}(\mathcal S)$. In particular, both $\hbox{Cofilt}(\mathcal S)$ and $\hbox{sCofilt}(\mathcal S)$ are closed under arbitrary direct products and extensions.

A class of modules $\mathcal C$ is \emph{codeconstructible} provided that there is a \emph{subset} $\mathcal S \subseteq \mathcal C$ such that $\mathcal C = \hbox{Cofilt}(\mathcal S)$.     
  
Here is a partial dual of Lemma \ref{setgen}:

\begin{lemma}\label{setcogen} Let $\mathfrak C = (\mathcal A,\mathcal B)$ be a cotorsion pair in $\rmod R$. Consider the following conditions:
\begin{enumerate}
\item $\mathfrak C$ is cogenerated by a set.
\item There is a subset $\mathcal S \subseteq \mathcal B$ such that $\mathcal B = \hbox{sCofilt}(\mathcal S)$.
\item $\mathcal B$ is codeconstructible.
\item There is an epimorphism $g \in \Hom RIJ$ with $I$ injective, such that $\mathcal A$ is the cofactorization class of $g$.
\end{enumerate}
Then (1) is equivalent to (4). Moreover, (3) implies (2), and (2) implies (1).  
\end{lemma}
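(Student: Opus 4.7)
The plan is to dualize the proof of Lemma \ref{setgen}: I will show (1) $\Leftrightarrow$ (4) directly, (3) $\Rightarrow$ (2) by closure under direct summands, and (2) $\Rightarrow$ (1) via a dual Eklof-type argument; this last step is the one genuinely new ingredient.

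For (1) $\Rightarrow$ (4), given a set $\mathcal T \subseteq \mathcal B$ with $\mathcal A = {}^\perp \mathcal T$, pick for each $T \in \mathcal T$ an injective copresentation $0 \to T \to I_T \overset{g_T}\to J_T \to 0$ and set $g = \prod_{T \in \mathcal T} g_T$, $I = \prod_T I_T$, $J = \prod_T J_T$. Then $I$ is injective (products of injectives are injective in $\rmod R$), $g$ is an epimorphism componentwise, and $\Hom R N g$ is surjective iff $\Ext 1 R N T = 0$ for every $T \in \mathcal T$, iff $N \in {}^\perp \mathcal T = \mathcal A$. For (4) $\Rightarrow$ (1), setting $K = \Ker g$, the long exact $\mbox{\rm Ext}$-sequence of $0 \to K \to I \to J \to 0$ together with $\Ext 1 R N I = 0$ yields $\mathcal A = {}^\perp \{K\}$, so $\mathfrak C$ is cogenerated by the singleton $\{K\}$. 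For (3) $\Rightarrow$ (2), since $\mathcal S \subseteq \mathcal B$ and $\mathcal B$ is closed under direct summands as the right-hand class of a cotorsion pair, $\hbox{sCofilt}(\mathcal S) \subseteq \mathcal B = \hbox{Cofilt}(\mathcal S) \subseteq \hbox{sCofilt}(\mathcal S)$.

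For (2) $\Rightarrow$ (1), the claim reduces to $\mathcal A = {}^\perp \mathcal S$, where only the inclusion ${}^\perp \mathcal S \subseteq \mathcal A$ is nontrivial. Fix $N \in {}^\perp \mathcal S$ and $B \in \mathcal B$; write $B$ as a summand of some $M \in \hbox{Cofilt}(\mathcal S)$ and argue $\Ext 1 R N M = 0$ by transfinite induction along the cofiltration $(M_\alpha)_{\alpha < \sigma}$. At successor steps, the short exact sequence $0 \to \Ker{\pi_{\alpha, \alpha+1}} \to M_{\alpha+1} \to M_\alpha \to 0$, whose kernel lies in $\mathcal S$, both propagates vanishing of $\Ext 1 R N {-}$ and yields the surjectivity of $\Hom R N {M_{\alpha+1}} \to \Hom R N {M_\alpha}$; this Mittag-Leffler-type property is what controls the system at limit ordinals via the standard short exact sequence expressing $\varprojlim$ as a kernel between products. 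The hard part is handling limit ordinals of uncountable cofinality, where $\varprojlim^1$ is delicate: I would either reduce to cofinal $\omega$-subchains when cofinality permits, or invoke a ready-made dual Eklof lemma from the literature, e.g.\ from \cite{GT}. Once $\mathcal A = {}^\perp \mathcal S$ is established, $\mathcal S$ is the desired cogenerating set, completing~(1).
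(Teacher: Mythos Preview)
Your proposal is correct and follows essentially the same approach as the paper. The paper's proof is terser: it handles (1) $\Leftrightarrow$ (4) by saying ``dual to Lemma~\ref{setgen}'' (which your explicit argument with products of injective copresentations carries out), handles (3) $\Rightarrow$ (2) by closure under direct summands exactly as you do, and for (2) $\Rightarrow$ (1) simply cites the Lukas Lemma \cite[6.37]{GT}, which is precisely the ``ready-made dual Eklof lemma'' you reach for after sketching the inverse-limit argument.
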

\begin{proof} The proof of the equivalence of (1) and (4) is dual to the proof of the corresponding equivalence in Lemma \ref{setgen}. 
Moreover, (3) implies (2) because $\mathcal B$ is closed under direct summands. That (2) implies (1) follows by the Lukas Lemma \cite[6.37]{GT}.
\end{proof} 

\medskip
The following example shows that (2) need not imply (3) even for the trivial cotorsion pair $\mathfrak I = (\rmod R,\mathcal I_0)$, i.e., 
there is no dual to Kaplansky's theorem for cotorsion pairs for $\mathfrak I$. 

\begin{example}\label{codeconstr1} (i) For any ring $R$, $\mathfrak I$ is cogenerated by any set consisting of injective modules. Consider such a set $\mathcal S$. Then $\hbox{Cofilt}(\mathcal S)$ is the class of all modules isomorphic to (arbitrary) direct products of modules from $\mathcal S$. In particular, if $\mathcal S _0 = \{ W \}$ where $W$ is an injective cogenerator for $\rmod R$, then $\mathcal I_0 = \hbox{sCofilt}(\mathcal S _0)$. So condition (2) is satisfied.
 
(ii) Now, assume that $R$ is a commutative noetherian ring with a maximal ideal $m$ satisfying $\bigcap_{n < \omega} m^n = 0$ and $m \notin \hbox{Ass} R$ (this occurs when $R$ is a noetherian domain which is not a field, or $R$ is a local Gorenstein ring of Krull dimension $\geq 1$). Assume that $\mathcal I _0 = \hbox{Cofilt}(\mathcal S)$ for a set $\mathcal S \subseteq \mathcal I _0$. Then $\mathcal S$ must contain copies of all indecomposable injectives. Since $\bigcap_{n < \omega} m^n = 0$, if $\mathcal D$ is any infinite sequence of elements of $\mathcal S$ such that for each $D \in \mathcal D$, $m \in \Ass D$, then $P = \prod_{D \in \mathcal D} D$ contains a copy of $R$. So $P$ has indecomposable direct summands with associated primes different from $m$. It follows that for each infinite cardinal $\kappa$, $E(R/m)^{(\kappa )}$ is isomorphic to a~module in $\mathcal S$, in contradiction with $\mathcal S$ being a set. Thus $\hbox{Cofilt}(\mathcal S) \subsetneq \mathcal I _0$ for any set $\mathcal S$ of injective modules, i.e., condition (3) fails.  
\end{example} 

Our next example generalizes part (i) of Example \ref{codeconstr1} to $1$-cotilting cotorsion pairs. Hence, it also covers the setting of Dedekind domains and cotorsion pairs $(\mathcal F, \mathcal C)$ such that $\mathcal F \supseteq \mathcal F _0$ (see Theorem \ref{Dedekind} below): 

\begin{example}\label{codeconstr2} Let $R$ be a ring and $\mathfrak C = (\mathcal A, \mathcal B)$ be a cotilting cotorsion pair cogenerated by a $1$-cotilting module $C$, i.e., a module $C$ such that $\Cog{C}={}^\perp C$. Then $\mathfrak C$ satisfies the condition (2) of Lemma~\ref{setcogen}. 

Indeed, using \cite[15.20]{GT} and a dual version of \cite[6.13]{GT}, we see that $\mathcal B$ coincides with the class of all modules that are direct summands of the modules $M$ that fit in an exact sequence $0 \to C^\lambda \to M \to I \to 0$ for an injective module $I$ and a cardinal $\lambda$. Let $W$ denote an injective cogenerator for $\rmod R$. Then $I \oplus J \cong W^\kappa$ for a cardinal $\kappa$, and we have the exact sequence $0 \to C^\lambda \to M \oplus J \to W^\kappa \to 0$. Let $\mathcal S = \{ C, W \}$ ($\subseteq \mathcal B$). Then the module $M \oplus J$ is $\mathcal S$-cofiltered, whence $\mathcal B = \hbox{sCofilt}(\mathcal S)$.    
\end{example}
  
Lemma \ref{setcogen} is sufficient to settle our problem for flat and torsion-free modules:

\begin{corollary}\label{flatetc} Let $R$ be a ring and $n < \omega$. Then there exist epimorphisms $g_n \in \Hom R{I_n}{J_n}$ and $h \in \Hom RIJ$ with $I_n$ and $I$ injective, such that the class $\mathcal F _n$ of all modules of flat dimension $\leq n$ (the class of all torsion free modules) is a cofactorization class of $g_n$ ($h$).
\end{corollary}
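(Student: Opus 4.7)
The plan is to apply Lemma~\ref{setcogen}, specifically the implication $(1)\Rightarrow(4)$, to each of the two cotorsion pairs in question; this reduces the task to showing that $(\mathcal F_n, \mathcal F_n^\perp)$ and the torsion-free cotorsion pair are each cogenerated by a set.

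For $(\mathcal F_n, \mathcal F_n^\perp)$, the key ingredient is the natural isomorphism $\Ext i R M {N^+} \cong (\Tor i R M N)^+$, where $N^+ = \Hom{\mathbb Z}N{\mathbb Q/\mathbb Z}$ is the character module of a left $R$-module $N$; in particular, $\Ext i R M {N^+}=0$ iff $\Tor i R M N = 0$. Since $\mathrm{Tor}$ commutes with direct limits and every left $R$-module is a direct limit of finitely presented ones, $M \in \mathcal F_n$ iff $\Tor{n+1}{R}{M}{N}=0$ for every finitely presented left module $N$. A standard dimension shift along a projective resolution of $N$ by finitely generated projectives gives $\Tor{n+1}{R}{M}{N}\cong \Tor 1 R M {\Omega^n N}$, where the $n$-th syzygy $\Omega^n N$ has cardinality at most $|R|+\aleph_0$. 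Fixing such a resolution for each isomorphism class of finitely presented left module $N$ and setting $\mathcal S_n=\{(\Omega^n N)^+ \mid N \text{ finitely presented left } R\text{-module}\}$, we obtain a set with $\mathcal F_n={}^\perp\mathcal S_n$; thus the cotorsion pair is cogenerated by a set, and Lemma~\ref{setcogen} produces the required $g_n$.

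For torsion-free modules (say, over an integral domain $R$) the argument is simpler: $M$ is torsion-free iff $\Tor 1 R M {R/rR}=0$ for every nonzero $r\in R$, iff $\Ext 1 R M {(R/rR)^+}=0$ for every such $r$. The set $\mathcal S=\{(R/rR)^+\mid 0\ne r\in R\}$ therefore cogenerates the torsion-free cotorsion pair, and Lemma~\ref{setcogen} supplies $h$. The main potential obstacle lies in the first case --- threading the Tor--Ext duality together with dimension shifting and tracking cardinalities to ensure $\mathcal S_n$ is genuinely a set --- but all ingredients are standard, as the torsion-free case illustrates in simplest form.
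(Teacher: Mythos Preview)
Your proposal is correct and follows essentially the same route as the paper: apply Lemma~\ref{setcogen} after showing the relevant cotorsion pair is cogenerated by a set, using the Tor--Ext duality $\Ext iRM{N^+}\cong(\Tor iRMN)^+$ together with dimension shifting. The only minor differences are that the paper tests against \emph{cyclic} left modules (so the syzygies are automatically parameterized by the set of left ideals) rather than finitely presented ones, and for torsion-free modules it works over an arbitrary ring using non-zero-divisors $r\in R$ rather than restricting to integral domains---your argument goes through verbatim with that substitution.
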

\begin{proof} The first claim follows by Lemma \ref{setcogen}, since a module $M$ has weak dimension $\leq n$, iff $\Tor {n+1}RMC = 0$ for each cyclic left $R$-module $C$, iff $\Tor 1RM{\Omega ^n(C)} = 0$ for each cyclic left $R$-module $C$, iff $\Ext 1RMD = 0$ for each module $D$ which is the dual module of the $n$th syzygy left $R$-module $\Omega ^n(C)$ of a cyclic left $R$-module~$C$. For the second claim, one uses the characterization of torsion-free modules as the modules $M$ such that $\Tor 1RM{R/Rr} = 0$ for each non-zero divisor $r \in R$.  
\end{proof}

In particular, we have a solution for modules of bounded projective dimension in the case when projectivity and flatness of modules coincide, that is, for right perfect rings:
 
\begin{corollary}\label{perfect} Let $R$ be a right prefect ring and $n < \omega$. Then there exists an epimorphism $g_n \in \Hom R{I_n}{J_n}$ with $I_n$ injective, such that the class $\mathcal P _n$ of all modules of projective dimension $\leq n$ is a cofactorization class of $g_n$.
\end{corollary}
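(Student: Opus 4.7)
The plan is to reduce Corollary~\ref{perfect} directly to Corollary~\ref{flatetc} by identifying the classes $\mathcal{P}_n$ and $\mathcal{F}_n$ over a right perfect ring. The key ingredient is Bass's characterization of right perfect rings, which asserts that $R$ is right perfect if and only if every flat right $R$-module is projective. Once this equality of classes is established, the epimorphism $g_n$ delivered by Corollary~\ref{flatetc} works verbatim.

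First I would observe that for any ring $R$ and any module $M$, one always has $\fd M \leq \pd M$, so $\mathcal{P}_n \subseteq \mathcal{F}_n$ unconditionally. The content lies in the reverse inclusion for right perfect $R$. Pick $M \in \mathcal{F}_n$ and a projective resolution
\[
0 \to K \to P_{n-1} \to \cdots \to P_0 \to M \to 0,
\]
with each $P_i$ projective. Since $\fd M \leq n$ and flat dimension can be computed from any such partial resolution, the $n$th syzygy $K$ is flat. By Bass's theorem, $K$ is then projective, so $\pd M \leq n$ and $M \in \mathcal{P}_n$.

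Therefore $\mathcal{P}_n = \mathcal{F}_n$ when $R$ is right perfect. Applying Corollary~\ref{flatetc} produces an injective module $I_n$ and an epimorphism $g_n \in \Hom R{I_n}{J_n}$ whose cofactorization class is $\mathcal{F}_n$, and by the displayed equality this cofactorization class coincides with $\mathcal{P}_n$, as required.

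There is essentially no obstacle here: the whole argument is a one-line reduction once Bass's theorem is invoked, and the syzygy-based verification that $\fd M = \pd M$ over right perfect rings is entirely routine. The substantive work has already been done in Corollary~\ref{flatetc} via Lemma~\ref{setcogen}.
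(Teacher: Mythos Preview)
Your argument is correct and matches the paper's approach exactly: the paper states the corollary immediately after Corollary~\ref{flatetc}, noting only that for right perfect rings projectivity and flatness coincide, so $\mathcal P_n = \mathcal F_n$ and Corollary~\ref{flatetc} applies directly. Your syzygy verification that $\fd M = \pd M$ over right perfect rings simply makes this one-line reduction explicit.
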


\begin{remark}\label{variants} Note that, over each right perfect ring $R$, there are simpler test epimorphisms than $g_0$ from Corollary \ref{flatetc} available: 

1. By \cite{S} (or \cite{KV}), the Dual Baer Criterion (DBC) for projectivity holds in $\rmod R$, that is, one can use the epimorphism $g : R^{T} \to \prod_{I \in T} I$, where $T$ is the set of all right ideals of $R$: $\mathcal P _0$ is the cofactorization class of $g$.

2. Let $\pi$ be the finite direct product of the projections $\pi_S : E(S) \to E(S)/S$ where $S$ runs over a representative set of all simple modules. Then $\mathcal P _0$ is the cofactorization class of $\pi$ by \cite[8.8]{GT}. 
\end{remark}

For non-right perfect rings, the following consistency result gives a barrier to testing for projectivity using \emph{sets} of epimorphisms: 

\begin{lemma}\label{sup} Assume Shelah's Uniformization Principle (SUP). Let $R$ a non-right perfect ring. 

Then for each set $\mathcal S\subset\rmod{R}$, there exists a module $M$ with $\pd M = 1$ and $\Ext 1RMS = 0$ for all $S \in \mathcal S$. 
So the cotorsion pair $(\mathcal P_0,\rmod R)$ is not cogenerated by any set of modules. 

Moreover, there is \emph{no} epimorphism $\pi$ such that $\mathcal P _0$ is the cofactorization class of~$\pi$.     
\end{lemma}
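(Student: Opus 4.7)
The plan is to prove the first assertion---which does all the real work---and then derive parts (2) and (3) by routine homological bookkeeping. First, since $R$ is not right perfect, Bass's theorem yields a countably presented flat $R$-module $N$ which is not projective; in particular $\pd N = 1$. Given the set $\mathcal S\subseteq \rmod R$, I would fix a regular uncountable cardinal $\kappa > |R| + \sup_{S\in\mathcal S}|S|$ on which SUP holds, together with a stationary set $E\subseteq \kappa$ of ordinals of countable cofinality and a ladder system $(\eta_\nu\mid\nu\in E)$ supporting the uniformization needed to absorb colourings of cardinality $<\kappa$.

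The module $M$ would then be constructed as a continuous filtration $(M_\alpha\mid\alpha<\kappa)$ with $M_0 = 0$, every $M_\alpha$ a free $R$-module of rank $<\kappa$, $M_{\alpha+1}/M_\alpha \cong N$ for $\alpha\in E$, and $M_{\alpha+1}/M_\alpha$ free otherwise. Since every successive quotient in the filtration has projective dimension at most one, a standard long-exact-sequence argument would give $\pd M\leq 1$. A $\Gamma$-invariant / Pontryagin-type computation, exploiting stationarity of $E$ and non-projectivity of $N$, then confirms that $M$ is not projective, so that in fact $\pd M = 1$.

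The crucial step is verifying $\Ext 1RMS = 0$ for every $S\in\mathcal S$. Given any extension $0\to S\to X\to M\to 0$, I would pull it back along each inclusion $M_\alpha \hookrightarrow M$ to obtain extensions $0\to S\to X_\alpha\to M_\alpha\to 0$, each of which splits because $M_\alpha$ is free; one then selects splittings $\sigma_\alpha\colon M_\alpha\to X_\alpha$. At each $\nu\in E$, the discrepancy $\sigma_{\nu+1}\!\upharpoonright_{M_\nu}-\sigma_\nu$ is a homomorphism $M_\nu\to S$ whose evaluation along $\eta_\nu$ packages it as an element of $S^\omega$. Applying SUP would yield a single global correction that simultaneously cancels all these obstructions, after which the corrected $\sigma_\alpha$ glue to a genuine splitting of $0\to S\to X\to M\to 0$. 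This uniformization step is where the main difficulty lies: one has to arrange the ladder system and the choice of $\sigma_\alpha$ at the limit ordinals in $E$ so that the obstruction really lives in the colouring space handled by SUP, and so that the correction stays $R$-linear and compatible across successive levels of the filtration.

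Once the first claim is in place, part (2) follows at once: any cogenerating set $\mathcal S$ of the cotorsion pair $(\mathcal P_0,\rmod R)$ would force the constructed $M$ to lie in ${}^\perp\mathcal S = \mathcal P_0$, contradicting $\pd M = 1$. For part (3), if $\mathcal P_0$ were the cofactorization class of some epimorphism $\pi$ with kernel $K$, the long exact sequence in Ext shows that any module with $\Ext 1RMK = 0$ makes $\Hom RM\pi$ surjective, hence lies in $\mathcal P_0$; applying the first claim to $\mathcal S = \{K\}$ would then produce a non-projective $M$ in the cofactorization class of $\pi$, the desired contradiction.
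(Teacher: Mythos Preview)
Your proposal is correct and follows essentially the same route as the paper: the paper simply cites \cite[2.4]{T1} for the construction of $M$ under SUP (which is exactly the ladder-system filtration with consecutive quotients isomorphic to a countably presented flat non-projective module that you sketch), and then derives the ``Moreover'' clause by taking $\mathcal S = \{\Ker\pi\}$, just as you do. One small point worth tightening in your write-up: the cardinal $\kappa$ on which the uniformization holds should be chosen explicitly as $\tau^+$ for a singular $\tau$ of cofinality $\omega$ with $\tau > |R| + \sup_{S\in\mathcal S}|S|$, since that is the form in which SUP is stated and used in \cite{ES} and \cite{T1}.
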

\begin{proof} For the first claim, see e.g.\ \cite[2.4]{T1}. The second claim follows from the first one for $\mathcal S = \{ \Ker{\pi} \}$. 
\end{proof}  

Note that if ZFC is consistent, then so is ZFC + SUP, see \cite{ES}. 

\bigskip
We now pause to look at other cotorsion pairs, but in the particular setting of Dedekind domains. Once again, (SUP) comes as a handy tool here. The following theorem generalizes \cite[1.3]{EST}: 

\begin{theorem}\label{Dedekind} Let $R$ be a Dedekind domain with the quotient field $Q$, and $\mathfrak C = (\mathcal F, \mathcal C)$ be a cotorsion pair. 
\begin{enumerate}
	\item Assume $\mathcal F _0 \subseteq \mathcal F$. Then $\mathfrak C$ is a cotilting cotorsion pair, i.e., $\mathfrak C$ is cogenerated by a pure-injective module $C$ such that $\Cog C = {}^\perp C \, (= \mathcal F)$. In particular, $\mathfrak C$ is generated by a set, and there is a unique subset $P \subseteq \mSpec R$ such that $\mathcal F = \{ M \in \rmod R \mid  \forall p \in P : \Hom R{R/p}M = 0 \}$.   
	\item Assume (SUP). Moreover, assume that $\mathcal F _0 \nsubseteq \mathcal F$, $Q$ is a countably generated module, and $\mathfrak C$ is generated by a set.
		
Then there is \emph{no} epimorphism $\pi$ such that $\mathcal F$ is the cofactorization class of $\pi$.           
\end{enumerate} 
\end{theorem}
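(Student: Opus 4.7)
The strategy is to follow the template of Lemma~\ref{sup}. Suppose for contradiction that an epimorphism $\pi \in \Hom R A B$ has $\mathcal F$ as its cofactorization class, and set $K = \Ker \pi$. Applying $\Hom R M -$ to the short exact sequence $0 \to K \to A \overset{\pi}{\to} B \to 0$, the long exact sequence shows that $\Ext 1 R M K = 0$ already forces $\Hom R M \pi$ to be surjective, i.e.\ $M \in \mathcal F$. Hence it suffices to produce a module $M$ with $\pd M \leq 1$, $\Ext 1 R M K = 0$, and $M \notin \mathcal F$, for the contradiction.

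The first step is to reformulate the hypothesis $\mathcal F _0 \nsubseteq \mathcal F$ as $Q \notin \mathcal F$. Over a Dedekind domain a module is Matlis cotorsion if and only if it is Enochs cotorsion, whence $\{Q\}^\perp = \mathcal F _0 ^\perp$. Consequently
\[
Q \in \mathcal F = {}^\perp \mathcal C \;\Longleftrightarrow\; \mathcal C \subseteq \{Q\}^\perp = \mathcal F _0 ^\perp \;\Longleftrightarrow\; \mathcal F _0 = {}^\perp(\mathcal F _0 ^\perp) \subseteq \mathcal F,
\]
and the last inclusion is precisely what is excluded. I therefore fix a witness $C \in \mathcal C$ with $\Ext 1 R Q C \neq 0$.

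The second step is a Shelah-style construction under SUP, refining the one invoked in \cite[2.4]{T1}. The hypotheses are met: $R$ is not right perfect (as a non-field Dedekind domain), and $Q$ is a countably presented flat $R$-module with $\pd Q \leq 1$ and $\Ext 1 R Q C \neq 0$. At a regular cardinal $\kappa > |K| + |C|$ at which SUP holds, the construction yields a $\{Q\}$-filtered module $M$ of cardinality $\kappa$ satisfying $\pd M \leq 1$, $\Ext 1 R M K = 0$, and $\Ext 1 R M C \neq 0$: the $\{Q\}$-filtration controls the projective dimension; ladder-system uniformization supplied by SUP kills $\Ext 1 R M K$; and a Whitehead-style diagonal argument, propagating a nonzero class in $\Ext 1 R Q C$ through the filtration cocycles, keeps $\Ext 1 R M C$ nonzero.

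Combining the three steps yields the contradiction: $\Ext 1 R M K = 0$ forces $M \in \mathcal F = {}^\perp \mathcal C$, so $\Ext 1 R M C = 0$, which clashes with $\Ext 1 R M C \neq 0$. The main obstacle is the twin control provided by the SUP construction: simultaneously killing $\Ext 1 R M K$ while preserving $\Ext 1 R M C$. This is the standard "separation of roles" in Whitehead-type constructions and is legitimate because $K$ and $C$ enter the ladder-system prediction as independent parameters, provided the cardinal $\kappa$ of the construction is chosen larger than both $|K|$ and $|C|$.
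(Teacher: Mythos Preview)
Your reformulation $\mathcal F_0 \nsubseteq \mathcal F \Longleftrightarrow Q \notin \mathcal F$ is correct, and the reduction ``$\Ext 1RMK = 0 \Rightarrow M \in \mathcal F$'' is fine. The gap is in the SUP step. The uniformization construction from \cite{T1} and \cite{EST} produces a $\{Q\}$-filtered module $H$ with the \emph{uniform} vanishing property $\Ext 1RHN = 0$ for \emph{every} module $N$ with $|N| < \tau$; this is a consequence of the ladder-system uniformization, not a choice made in the construction. In particular, once you take $\tau$ large enough to kill $\Ext 1RHK$, you automatically also kill $\Ext 1RHC$ for your fixed small witness $C \in \mathcal C$. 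There is no ``separation of roles'': SUP is an \emph{anti}-prediction principle, and the ladder system carries no prediction component into which $K$ and $C$ could enter independently. What you describe as a ``Whitehead-style diagonal argument'' to keep $\Ext 1RMC \neq 0$ would require a diamond-type guessing principle, which is incompatible with SUP at the same cardinal on the same stationary set.

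The paper circumvents this obstacle by not testing $H \notin \mathcal F$ against a small module at all. Instead it builds a \emph{large} module $A \in \mathcal C$ (of size $\mu = \tau^+$), filtered by copies of a generator $B$ of $\mathfrak C$ (or of its torsion-free part $F(B)$), and shows $\Ext 1RHA \neq 0$ directly from the non-projectivity of $H$ and the filtration structure of $A$. The case distinction (whether $\mathfrak C$ is generated by a cotorsion module or not) is needed precisely to ensure that such an $A$ can be produced inside $\mathcal C$; in the cotorsion case this requires the structural analysis $B^\perp = (E(T(B)) \oplus F(B))^\perp$ together with the description of reduced torsion-free cotorsion modules over Dedekind domains. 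Your approach tries to avoid this machinery, but the shortcut does not exist under SUP alone.
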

\begin{proof} (1) This is proved in \cite[16.29]{GT} (see also \cite[16.21]{GT}). 

(2) In \cite[1.3]{EST}, it is proved (in a different notation) that under the same assumptions, $\mathfrak C$ is not cogenerated by a set, i.e., there is no epimorphism $\pi : I \to J$ \emph{with $I$ injective} such that $\mathcal F$ is the cofactorization class of $\pi$ (see Lemma \ref{setcogen}). In order to prove our stronger claim, we will follow the pattern of the proof of \cite[1.3]{EST}, indicating only the necessary changes needed to strengthen the claim. 

Assume there is an epimorphism $\pi$ in $\rmod R$ such that $\mathcal F$ is the cofactorization class of $\pi$. Let $K = \Ker{\pi}$.

Since $Q$ is countably generated, there is a  short exact sequence $0 \to R^{(\omega)} \overset{f}\to R^{(\omega)} \to Q \to 0$ with $f (1_i) = 1_i - \rho_i 1_{i+1}$ where $\{ \rho_i \mid i < \omega \}$ is  a set of non-invertible non-zero elements of $R$. As in \cite{EST}, we use this presentation and (SUP) to define for a cardinal $\mu = \tau ^+$, where $\tau$ is an uncountable singular cardinal of cofinality $\omega$, a~non-projective module $H = F/G$, where $F$ and $G$ are free modules of rank $\mu$, such that $\Ext 1RHN = 0$ for each module $N$ of cardinality $< \tau$.        

As in \cite{EST}, we distinguish two cases: (1) $\mathfrak C$ is generated by a cotorsion module $B$, and (2) there is no cotorsion module generating $\mathfrak C$ (but, by our assumption, there is still a non-cotorsion module $B$ such that $\mathcal C = B^\perp$).  

In the case (2), we take a singular cardinal $\tau$ of cofinality $\omega$ bigger than the cardinalities of $R$, $B$ and $K$ and $\mu = \tau^+$. Taking the appropriate $\{ B \}$-filtered module $A \in B^\perp = \mathcal C$, we infer as in the proof of case (2) in \cite[1.3]{EST} that for the module $H$ as above, $\Ext 1RHK = 0$, so $H \in \mathcal F$, but $\Ext 1RHA \neq 0$, in contradiction with $A \in \mathcal C$.   

In the case (1), we will first prove the following claim: $\mathcal C = B^\perp = (E(T(B)) \oplus F(B))^\perp$ where $T(B)$ is the torsion part of $B$, $E(T(B))$ is its injective hull, and $F(B)$ is the torsion-free part of $B$. 

Consider the push-out of the embeddings $T(B) \subseteq B$ and $T(B) \subseteq E(T(B))$: 

$$\begin{CD}
@.     0@.        0      @.     @.\\
@.     @VVV       @VVV   @.     @.\\
0@>>>  T(B)@>{\subseteq}>> B@>>>  F(B)@>>>  0\\
@.     @V{\subseteq}VV  @VVV   @|     @.\\
0@>>>  E(T(B))@>>>      X@>>>  F(B)@>>>  0\\
@.     @VVV       @VVV @.  @.\\
@.     E(T(B))/T(B) @=      E(T(B))/T(B)      @.     @.\\
@.     @VVV       @VVV   @.     @.\\
@.     0@.        0.      @.     @.
\end{CD}$$
 
Since the second row splits, it suffices to prove that $B^\perp = X^\perp$. As $R$ is hereditary, the second column gives $X^\perp \subseteq B^\perp$, and the first row yields $B^\perp \subseteq T(B) ^\perp$. Let $P$ be the set of all maximal ideals $p$ such that $p \in \Ass T(B)$. Then $E(T(B))^\perp = T(B)^\perp = (\bigoplus_{p \in P} R/p)^\perp = \{ M \in \rmod R \mid \forall p\in P : M.p = M \}$, since $E(T(B))$ is $\{ R/p \mid p \in P \}$-filtered. Moreover, there is a subset $P^\prime \subseteq P$ such that $E(T(B))/T(B)$ is $\{ R/p \mid p \in P^\prime \}$-filtered. Hence $T(B)^\perp \subseteq  (E(T(B))/\hbox{T(B)})^\perp$. The second column gives $B^\perp \cap (E(T(B))/T(B))^\perp \subseteq X^\perp$. However, $B^\perp \subseteq T(B) ^\perp \subseteq (E(T(B))/\hbox{T(B)})^\perp$, whence $B^\perp \subseteq X^\perp$, and our claim is proved. 

Furthermore, $F(B)$ is a torsion-free cotorsion module, which is reduced (because $\mathcal F _0 \nsubseteq \mathcal F$), so it is isomorphic to a product of completions of localizations of free $R_q$-modules where $q$ ranges over some subset $S \subseteq \mSpec R$, see \cite[5.3.28]{EJ}. If $q \in S \cap P$, then the $q$-adic module $J_q \in \mathcal F$ as well as $E(R/q) \in \mathcal F$. Since $\mathcal F$ is closed under extensions, also $E(J_q) \in \mathcal F$, whence $Q \in \mathcal F$, a contradiction. Thus, $S \cap P = \emptyset$. Since the completion of the localizations of a free $R_q$-module is $p$-divisible for all $p \neq q$, we infer that $F(B).p = F(B)$ for all $p \in P$. 

The rest of the proof is the same as for the case (1) in \cite[1.3]{EST}, i.e., we take a~singular cardinal $\tau$ of cofinality $\omega$ bigger than the cardinalities of $R$, $B$ and $K$ and let $\mu = \tau^+$, and take the appropriate $\{ F(B) \}$-filtered module $A \in B^\perp = \mathcal C$ so that for the module $H$ as above, $\Ext 1RHK = 0$, whence $H \in \mathcal F$, but $\Ext 1RHA \neq 0$, in contradiction with $A \in \mathcal C$.          
\end{proof}

\medskip

Next we show that in the extensions of ZFC with the Weak Diamond Principle~$\Phi$, for many non-right perfect rings, the class of all projective modules is the cofactorization class of a suitable (single) epimorphism. The point is that one can combine the methods developed (in ZFC) in infinite dimensional tilting theory \cite[\S 8.2]{GT} with the following lemma employing $\Phi$ which generalizes \cite[Lemma~A.7]{FS0} and \cite[Theorem~XII.1.10]{EM}.

\smallskip

First, however, we have to recall the statement of the principle $\Phi$. Given a set $X$, we say that a system $(X_\alpha\mid\alpha<\kappa)$ is a \emph{$\kappa$-filtration of $X$} if $X = \bigcup_{\alpha<\kappa}X_\alpha$, $|X_\alpha|<\kappa$, $X_\alpha\subseteq X_{\alpha+1}$ for each $\alpha<\kappa$, and $X_\alpha = \bigcup_{\beta<\alpha}X_\beta$ whenever $\alpha<\kappa$ is a~limit ordinal.

Given a regular uncountable cardinal $\kappa$ and a stationary subset $S\subseteq \kappa$, we denote by $\Phi_\kappa(S)$ the following statement: let $X$ be a set with $|X|\leq\kappa$ and $(X_\alpha\mid \alpha\leq\kappa)$ be a $\kappa$-filtration of $X$; moreover, for each $\alpha\in S$ let a mapping $P_\alpha:\mathcal P(X_\alpha)\to \{0,1\}$ be given. Then there exists $\varphi:S\to \{0,1\}$ such that, for each $Y\subseteq X$, the set $\{\alpha\in S\mid P_\alpha(Y\cap X_\alpha) = \varphi(\alpha)\}$ is stationary in $\kappa$.

The Weak Diamond Principle $\Phi$ asserts that ``$\Phi_\kappa(S)$ holds for each regular uncountable cardinal $\kappa$ and stationary subset $S\subseteq\kappa$''. It is consistent with ZFC since it follows, for instance, from the axiom of constructibility V = L. It is however much weaker than V = L, and also weaker than the famous Jensen's diamond principle~$\diamondsuit$.

\begin{lemma}\label{l:wdiam} Let $A, B$ be $R$-modules with $|B|\leq \kappa$ where $\kappa$ is a regular uncountable cardinal. Assume that $A$ is the direct limit of a continuous well-ordered directed system $\mathcal A = (A_\alpha,h_{\beta \alpha}:A_\alpha\to A_\beta \mid \alpha<\beta<\kappa)$ where $A_\alpha$ is $<\kappa$-generated and $\Ext 1R{A_\alpha}B = 0$ for each $\alpha<\kappa$. If $S \subseteq \{\alpha<\kappa\mid \Ext 1R{\Coker{h_{\alpha+1 \alpha}}}B\neq 0\}$ is stationary in $\kappa$ and $\Phi_\kappa(S)$ holds, then $\Ext 1RAB\neq 0$.
\end{lemma}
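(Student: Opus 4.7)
The plan is to argue by contradiction: assume $\Ext 1RAB = 0$ and use $\Phi_\kappa(S)$ to construct an extension $0 \to B \to E \to A \to 0$ that cannot split, producing the required contradiction. The pivotal observation is that for every $\alpha \in S$, the hypotheses $\Ext 1R{A_{\alpha+1}}B = 0$ and $\Ext 1R{\Coker{h_{\alpha+1 \alpha}}}B \neq 0$ together imply, via the long exact $\Ext$-sequence associated with $A_{\alpha+1} \twoheadrightarrow \Coker{h_{\alpha+1 \alpha}}$, that not every $\phi \in \Hom R{A_\alpha}B$ lifts along $h_{\alpha+1 \alpha}$ to a map $A_{\alpha+1} \to B$. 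Working with the split extensions $E_\alpha = B \oplus A_\alpha$ (available because $\Ext 1R{A_\alpha}B = 0$), the transition embeddings $E_\alpha \hookrightarrow E_{\alpha+1}$ may be twisted by such a $\phi_\alpha$, and the canonical splitting $\rho_\alpha = (0,\mathrm{id}) : A_\alpha \to E_\alpha$ extends to a splitting of $E_{\alpha+1} \twoheadrightarrow A_{\alpha+1}$ precisely when the chosen $\phi_\alpha$ lifts. Thus at every $\alpha \in S$ we have (at least) two essentially different choices, which we label by $0$ (a lift exists) and $1$ (no lift).

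First I would fix a set $X$ of cardinality $\leq\kappa$ together with a $\kappa$-filtration $(X_\alpha\mid\alpha<\kappa)$ such that subsets $Y \subseteq X$ encode $R$-linear maps $A \to E$ and $Y \cap X_\alpha$ encodes the restriction to $A_\alpha$. For each $\alpha \in S$ I define $P_\alpha : \mathcal P(X_\alpha) \to \{0,1\}$ as follows: if $Y \subseteq X_\alpha$ encodes a splitting $\rho_\alpha$ of the provisional extension $E_\alpha \twoheadrightarrow A_\alpha$ (the module $E_\alpha$ being determined already by the data strictly below stage $\alpha$), let $P_\alpha(Y)$ be the label of the $\phi_\alpha$-choice at stage $\alpha+1$ that obstructs lifting of $\rho_\alpha$; otherwise put $P_\alpha(Y) = 0$. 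Apply $\Phi_\kappa(S)$ to obtain $\varphi : S \to \{0,1\}$. Then construct $E_\alpha$ by transfinite recursion with $E_0 = 0$: at limit ordinals take $E_\alpha = \varinjlim_{\beta<\alpha} E_\beta$; at successors $\alpha+1$ with $\alpha \in S$ use the $\phi_\alpha$-choice labelled $\varphi(\alpha)$; at successors with $\alpha \notin S$ use the trivial choice $\phi_\alpha = 0$. Setting $E = \varinjlim E_\alpha$ yields a short exact sequence $0 \to B \to E \to A \to 0$.

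If this sequence splits via some $\rho : A \to E$ encoded by $Y \subseteq X$, then the set $C$ of $\alpha < \kappa$ for which $\rho(A_\alpha) \subseteq E_\alpha$ and $Y \cap X_\alpha$ encodes $\rho|_{A_\alpha}$ is a club in $\kappa$ by continuity. By $\Phi_\kappa(S)$, the set $T = \{\alpha \in S \mid P_\alpha(Y \cap X_\alpha) = \varphi(\alpha)\}$ is stationary, hence meets $C$; pick any $\alpha \in T \cap C$. At such an $\alpha$ the extension $E_{\alpha+1}$ was built using precisely the $\phi_\alpha$-choice that obstructs any lift of $\rho|_{A_\alpha}$, contradicting the fact that $\rho|_{A_{\alpha+1}}$ is such a lift. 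The main obstacle, and the place where most care is needed, is the bookkeeping: choosing $X$ and $(X_\alpha)$ so that on a club of $\alpha$ the datum $Y \cap X_\alpha$ genuinely recovers $\rho|_{A_\alpha}$, ensuring $P_\alpha$ is a function of the data available at stage $\alpha$ alone (independently of later values of $\varphi$), and handling the possible non-injectivity of the maps $h_{\alpha+1 \alpha}$ when identifying the two choices via $\Coker{h_{\alpha+1 \alpha}}$. Once this is set up, the argument follows the standard weak-diamond template of \cite[Theorem~XII.1.10]{EM} and \cite[Lemma~A.7]{FS0}.
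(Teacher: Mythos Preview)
Your approach is correct in spirit and follows the same weak-diamond template as the paper, but the two proofs differ in how they present $\Ext^1$. The paper works with a continuous chain of free presentations $0\to K_\alpha\hookrightarrow F_\alpha\to A_\alpha\to 0$ and directly constructs an $f:K\to B$ that does not extend to $F$; the set $X$ is simply $V\times B$ where $V=\bigcup V_\alpha$ is a free basis of $F$, and $P_\alpha$ asks whether, for a map $z:F_\alpha\to B$ encoded by $Z\subseteq V_\alpha\times B_\alpha$, the induced homomorphism $A_\alpha\to B$ factors through $h_{\alpha+1\alpha}$. Crucially, $F_\alpha$, $K_\alpha$, $V_\alpha$ and the choice of a fixed non-factoring $k_\alpha\in\Hom R{A_\alpha}B$ are all fixed \emph{before} $\Phi_\kappa(S)$ is invoked, so there is no circularity: each $P_\alpha$ is a function of its argument alone.

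Your extension-theoretic version runs into exactly the bookkeeping obstacle you flag at the end, and it is a real one: you propose to encode maps $A\to E$, but $E$ (and the identification $E_\alpha\cong B\oplus A_\alpha$ at limit stages) depends on the $\phi_\beta$ already chosen, hence on $\varphi\!\restriction\![0,\alpha)$, while $\Phi_\kappa(S)$ requires all the $P_\alpha$ to be specified up front. The fix is to observe that, regardless of the transition maps, a splitting of $E_\alpha\twoheadrightarrow A_\alpha$ is determined by its $B$-component $\psi_\alpha:A_\alpha\to B$, and the relevant dichotomy is simply whether $\psi_\alpha$ factors through $h_{\alpha+1\alpha}$ (take $\phi_\alpha=k_\alpha$) or not (take $\phi_\alpha=0$); this depends only on $\psi_\alpha$ and $h_{\alpha+1\alpha}$, not on any earlier choice. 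So you should encode maps $A_\alpha\to B$ rather than maps into the as-yet-undefined $E$. Once you do this, your argument becomes isomorphic to the paper's: passing to a free presentation of $A$ is precisely what makes the encoding of such maps canonical (via a basis of the free module), which is why the paper's formulation is cleaner. The contradiction framing is also unnecessary, since you are directly exhibiting a nonzero element of $\Ext^1_R(A,B)$.
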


\begin{proof} First, we extend the system $\mathcal A$ into a continuous well-ordered system of short exact sequences $\mathcal E_\alpha:0\to K_\alpha\overset{\subseteq}{\to} F_\alpha\overset{\pi_\alpha}{\to} A_\alpha\to 0$ where $F_\alpha$ is a free module of rank $<\kappa$ and the three components of connecting maps $\varepsilon_{\alpha+1 \alpha}:\mathcal E_\alpha\to \mathcal E_{\alpha+1}$ are inclusion, split inclusion and $h_{\alpha+1 \alpha}$, respectively. Using the assumptions that $\kappa$ is infinite regular and $A_\alpha$ is $<\kappa$-generated for $\alpha<\kappa$, this is easy.

As the direct limit, we get a short exact sequence $0\to K \overset{\subseteq}{\to} F\to A\to 0$ where $F$ is free of rank $\leq\kappa$. Thus we obtain also a $\kappa$-filtration $(V_\alpha\mid \alpha<\kappa)$ of a set $V$ of free generators of $F$ where $V_\alpha$ is a set of free generators of $F_\alpha$ for each $\alpha<\kappa$.

Since $\Ext 1R{A_\alpha}B = 0$ for $\alpha<\kappa$, we can fix for each homomorphism $f:K_\alpha \to B$ one of its extensions $f^e\in\Hom R{F_\alpha}B$. Furthermore, we fix, for each $\alpha\in S$, a~$k_\alpha\in\Hom R{A_\alpha}B$ which cannot be factorized through $h_{\alpha+1\alpha}$.

Consider any $\kappa$-filtration $(B_\alpha\mid\alpha<\kappa)$ of the set $B$ (the $B_\alpha$ need not be submodules of $B$, just subsets) and put $X = V\times B$ and $X_\alpha = V_\alpha\times B_\alpha$ for each $\alpha<\kappa$. Let $\alpha\in S$ be arbitrary. We define the mapping $P_\alpha:\mathcal P_\alpha\to \{0,1\}$ as follows: if $Z\subseteq X_\alpha$ is not a mapping from $V_\alpha$ to $B_\alpha$, we put $P_\alpha(Z) = 0$; otherwise, we fix a unique extension $z\in\Hom R{F_\alpha}B$ of $Z$ and put $y = (z\restriction K_\alpha)^e$. Then $y-z$ is zero on $K_\alpha$ and thus it defines a unique homomorphism from $A_\alpha$ to $B$. We put $P_\alpha(Z) = 1$ if and only if this homomorphism can be factorized through $h_{\alpha+1\alpha}$.

Using $\Phi_\kappa(S)$, we get $\varphi:S\to \{0,1\}$ for our choice of the mappings $P_\alpha$. To show that $\Ext 1RAB\neq 0$, we recursively construct a homomorphism $f:K\to B$ which cannot be extended to an element of $\Hom RFB$. We start with $f_0:K_0 \to B$ the zero map. If $\alpha\leq\kappa$ is a limit ordinal, we put $f_\alpha = \bigcup_{\beta<\alpha}f_\beta$. Let us assume that $f_\alpha$ is already constructed and $\alpha<\kappa$. We define $f_{\alpha+1}: K_{\alpha+1}\to B$ as follows:

Let us put $f_\alpha^\prime = f_\alpha^e$ if $\alpha\not\in S$ or $\varphi(\alpha) = 0$; otherwise, we put $f_\alpha^\prime = f_\alpha^e+k_\alpha\pi_\alpha$. We extend $f_\alpha^\prime$ arbitrarily to a homomorphism $f_\alpha^+:F_{\alpha+1}\to B$ and define $f_{\alpha+1}$ as $f_\alpha^+\restriction K_{\alpha+1}$.

Finally, we put $f = f_\kappa:K\to B$. For the sake of contradiction, we assume that there exists $g\in\Hom RFB$ such that $g\restriction K = f$. It is easy to see that the set $C = \{\alpha<\kappa\mid g(V_\alpha)\subseteq B_\alpha\}$ is closed and unbounded. Using the property of $\varphi$ for $Y = g\restriction V$, we obtain a $\delta\in C\cap S$ such that $P_\delta(g\restriction V_\delta) = \varphi(\delta)$. Obviously, $f_\delta^+-g\restriction F_{\delta+1}$ is zero on $K_{\delta+1}$. Thus, it defines a unique $h\in\Hom R{A_{\delta+1}}B$. This $h$ extends the homomorphism $k:A_\delta\to B$ where $k\pi_\delta = f_\delta^\prime-g\restriction F_\delta$.

If $\varphi(\delta) = 0$, then $k\pi_\delta = f_\delta^e-g\restriction F_\delta$ in the contradiction with $P_\delta(g\restriction V_\delta) = 0$ which has meant that $k$ cannot be factorized through $h_{\delta+1\delta}$.

If on the other hand $\varphi(\delta) = 1$, then $k\pi_\delta = k_\delta\pi_\delta + f_\delta^e - g\restriction F_\delta$. Since $P_\delta(g\restriction V_\delta) = 1$, we know that $k-k_\delta$ can be factorized through $h_{\delta+1\delta}$ which immediately implies that $k_\delta$ has this property as well, in contradiction with its choice.
\end{proof}

A typical application of Lemma~\ref{l:wdiam} is the following: given a filtration $\mathcal A = (A_\alpha\mid \alpha<\kappa)$ of $A$ consisting of $<\kappa$-generated modules such that $A, A_\alpha\in {}^\perp B$ for each $\alpha<\kappa$ and $|B|\leq\kappa$, there exists a subfiltration of $\mathcal A$ with consecutive factors in ${}^\perp B$ provided that $\Phi$ holds true.

In the sequel, given a $\mathcal C\subseteq\ModR$ and a cardinal $\mu$, we denote by $\mathcal C^{\leq\mu}$ (or $\mathcal C^{<\mu}$, respectively) the subclass of $\mathcal C$ consisting of all the modules which are $\mu$-presented ($<\mu$-presented, respectively).

Also, we will say that a module $M$ has \emph{$\mathcal C$-resolution dimension} $\leq n$ provided that there exists an exact sequence 
$\mathcal E : 0\to C_n\to C_{n-1}\to\dotsb\to C_1\to C_0\to M\to 0$ with $C_i \in \mathcal C$ for all $i \leq n$. The sequence $\mathcal E$ is called a \emph{$\mathcal C$-resolution} of $M$ of length $n$.

\begin{theorem}\label{boundedpd} Assume that $\Phi$ holds true. Let $\mathfrak C = (\mathcal A,\mathcal B)$ be a~hereditary cotorsion pair such that $\mathcal A\cap\mathcal B = \Add K$ for some $K\in\ModR$. Assume that $\mu \geq |K|+|R|$ is an infinite cardinal such that $\mathcal A = \hbox{Filt}(\mathcal A^{\leq\mu})$. 

Let $M$ be a module of $\mathcal A$-resolution dimension $\leq n$ (e.g., assume $M \in \mathcal P _n$). Then $M\in\mathcal A$ if and only if $\Ext iRM{K^{(\mu)}} = 0$ for all $0 < i \leq n$. 
\end{theorem}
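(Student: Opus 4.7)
The easy direction is immediate: if $M\in\mathcal A$, then $K^{(\mu)}\in\Add K\subseteq\mathcal B$, so hereditariness of $\mathfrak C$ yields $\Ext iRM{K^{(\mu)}}=0$ for all $i\geq 1$, and in particular for $0<i\leq n$. For the converse I would proceed by induction on $n$. The base case $n=0$ is trivial, since the resolution gives $M\cong C_0\in\mathcal A$. For $n\geq 2$, fix an $\mathcal A$-resolution $0\to C_n\to\cdots\to C_0\to M\to 0$ and set $N=\Ker{C_0\to M}$; dimension shifting along $0\to N\to C_0\to M\to 0$ (using $C_0\in\mathcal A$ and hereditariness of $\mathfrak C$) yields $\Ext iRN{K^{(\mu)}}\cong \Ext{i+1}RM{K^{(\mu)}}=0$ for $0<i\leq n-1$. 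Since $N$ has $\mathcal A$-resolution dimension $\leq n-1$, the induction hypothesis gives $N\in\mathcal A$, and everything reduces to the core case $n=1$: given $0\to N\to C_0\to M\to 0$ with $N,C_0\in\mathcal A$ and $\Ext 1RM{K^{(\mu)}}=0$, show $M\in\mathcal A$.

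I would handle $n=1$ in two steps. First, the $\mu$-presented subcase ($|M|\leq\mu$): take a special $\mathcal A$-precover $0\to B\to N'\to M\to 0$, which exists since $(\mathcal A,\mathcal B)$ is generated by the set $\mathcal A^{\leq\mu}$, and form the pullback $P$ of $N'\twoheadrightarrow M$ along $C_0\twoheadrightarrow M$. The sequence $0\to B\to P\to C_0\to 0$ splits because $C_0\in\mathcal A$ and $B\in\mathcal B$, giving $P\cong B\oplus C_0$; the companion sequence $0\to N\to P\to N'\to 0$ places $P$ in $\mathcal A$ by extension-closure, and hence $B$ lies in $\mathcal A$ as a direct summand, so $B\in\mathcal A\cap\mathcal B=\Add K$. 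Since $M$ is $\mu$-presented, $\Ext 1RM{-}$ commutes with $\mu^+$-directed colimits; writing $K^{(\lambda)}$ as the $\mu^+$-directed union of its submodules $K^{(I)}$ with $|I|\leq\mu$ (each a summand of $K^{(\mu)}$), the hypothesis $\Ext 1RM{K^{(\mu)}}=0$ yields $\Ext 1RM{K^{(\lambda)}}=0$ for every $\lambda$, and hence $\Ext 1RMB=0$. The precover therefore splits, exhibiting $M$ as a summand of $N'\in\mathcal A$, so $M\in\mathcal A$.

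For general $M$ in the $n=1$ case, set $\kappa=(|M|+\mu)^+$ and, via a Hill-lemma construction applied to compatible $\mathcal A^{\leq\mu}$-filtrations of $N\subseteq C_0$, build a continuous filtration $(M_\alpha\mid\alpha<\kappa)$ of $M$ by $<\kappa$-generated submodules, each with an $\mathcal A$-resolution of length $\leq 1$ via $\mathcal A^{\leq\mu}$-filtered modules and with $\mu$-presented consecutive factors of the same form. After a transfinite closure step that iteratively enlarges each $M_\alpha$ inside $M$ to absorb classes of $\Ext 1R{M_\alpha}{K^{(\mu)}}$, keeping cardinalities below $\kappa$ (feasible because of the global vanishing $\Ext 1RM{K^{(\mu)}}=0$), one may assume $M_\alpha\in{}^\perp K^{(\mu)}$ for every $\alpha$. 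Lemma~\ref{l:wdiam} applied with $B=K^{(\mu)}$ and $\Phi_\kappa$ then forces the set of $\alpha$ with $\Ext 1R{M_{\alpha+1}/M_\alpha}{K^{(\mu)}}\neq 0$ to be non-stationary in $\kappa$; restricting to a club complement yields a filtration of $M$ whose consecutive factors are $\mu$-presented, have $\mathcal A$-resolution dimension $\leq 1$, and satisfy $\Ext 1R{-}{K^{(\mu)}}=0$. The $\mu$-presented subcase then identifies each such factor as a member of $\mathcal A$, so $M\in\hbox{Filt}(\mathcal A)=\mathcal A$. The most delicate point I anticipate is the closure step: since $\Ext 1R{-}{K^{(\mu)}}$ is contravariant in the first variable, vanishing on $M$ does not propagate to submodules, and reconciling cardinality control, Hill-family compatibility, and class-annihilation in a single transfinite construction is the technical heart of the argument.
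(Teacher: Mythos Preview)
Your reduction from general $n$ to $n=1$ is fine, and your treatment of the $\mu$-presented subcase is correct (if more elaborate than necessary). The genuine gap is exactly where you suspect it: the ``closure step'' in the general $n=1$ case.

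You want to enlarge each $M_\alpha$ inside $M$ so that $\Ext 1R{M_\alpha}{K^{(\mu)}}=0$, but there is no mechanism for this. Enlarging $M_\alpha$ to $M_\alpha'\subseteq M$ induces a map $\Ext 1R{M_\alpha'}{K^{(\mu)}}\to\Ext 1R{M_\alpha}{K^{(\mu)}}$ going the wrong way, and the long exact sequence of $0\to M_\alpha\to M\to M/M_\alpha\to 0$ only tells you that $\Ext 1R{M_\alpha}{K^{(\mu)}}$ injects into $\Ext 2R{M/M_\alpha}{K^{(\mu)}}$, about which you know nothing. The paper avoids this problem altogether by a preliminary reduction you are missing: before filtering, take a special $\mathcal B$-preenvelope of $A_1$ and push out, so that (after adding a summand) one may assume $A_1=K^{(\kappa)}$. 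Then \cite[Theorem~3.4]{ST} filters the whole short exact sequence $\mathcal E$ so that each $A_{\alpha,1}$ is a \emph{canonical direct summand} of $K^{(\kappa)}$. This is the point: any $f:A_{\alpha,1}\to K^{(\mu)}$ extends to $A_1$ by the splitting, then lifts to $A_0$ because $\Ext 1RM{K^{(\mu)}}=0$, and restricts back to $A_{\alpha,0}$; together with $A_{\alpha,0}\in\mathcal A$ this forces $\Ext 1R{M_\alpha}{K^{(\mu)}}=0$ for every $\alpha$ automatically, with no closure step at all.

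There is a second problem. You try to go in one step from the $\mu$-presented case to arbitrary $M$ by taking $\kappa=(|M|+\mu)^+$, applying $\Phi_\kappa$, and restricting to a club. But after restricting to a club the new consecutive factors $M_\beta/M_\alpha$ are unions of arbitrarily many (up to $<\kappa$) of the original $\mu$-presented factors, so they need not be $\mu$-presented and your base case does not apply to them. The paper instead inducts on $\kappa=|A_1|$: for regular $\kappa>\mu$ the filtration has pieces of size $<\kappa$ and one invokes the inductive hypothesis on the club-restricted factors; for singular $\kappa$ one uses the Singular Compactness Theorem. Without this cardinal induction (and the singular case), the argument does not close.
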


\begin{proof} The only-if part follows immediately since $K^{(\mu)}\in\mathcal B$ and $\mathfrak C$ is hereditary. Let us concentrate on the if part. We shall prove it by induction on $n$. The result is trivial for $n = 0$, so let $n = 1$.

We thus have a short exact sequence $\mathcal E:0\to A_1\to A_0\overset{\pi}{\to} M\to 0$ with $A_0,A_1\in\mathcal A$. Considering a special $\mathcal B$-preenvelope of $A_1$ and subsequently forming the obvious push-out, we can w.l.o.g.\ assume that $A_1\in \Add K$. By further adding a~suitable direct summand from $\Add K$ to $A_0$ and $A_1$, we can moreover assume that $A_1 = K^{(\kappa)}$ for some cardinal $\kappa$. We argue by induction on $\kappa$ that $M\in\mathcal A$.

If $\kappa\leq\mu$, then the short exact sequence $\mathcal E$ splits by our assumption, whence $M\in\mathcal A$. Assume that $\kappa = |A_1|>\mu$. Then $A_1$ is trivially $\{K\}$-filtered and $A_0$ is $\mathcal A^{\leq\mu}$-filtered by one of our assumptions. If $\kappa$ is regular, we use \cite[Theorem~3.4]{ST} to obtain a filtration $(\mathcal E_\alpha:0\to A_{\alpha,1}\to A_{\alpha,0}\to M_\alpha\to 0\mid \alpha<\kappa)$ of $\mathcal E$ where $A_{\alpha,1}$ is a~canonical direct summand of $A_1$ and $A_{\alpha,0}\in\mathcal A$, whilst $|A_{\alpha,0}|<\kappa$, for each $\alpha<\kappa$. Consequently, since $\Hom R{-}{K^{(\mu)}}$ turns $\mathcal E$ into a short exact sequence by our assumption, we see that $\Ext 1R{M_\alpha}{K^{(\mu)}} = 0$ for each $\alpha<\kappa$. Applying Lemma~\ref{l:wdiam}, we easily obtain a subfiltration of $(M_\alpha\mid \alpha<\kappa)$ with consecutive factors in ${}^\perp K^{(\mu)}$. Using the property of $M_{\alpha+1}/M_\alpha$ guaranteed by \cite[Theorem~3.4]{ST}, we see that thus obtained consecutive factors belong to $\mathcal A$ by inductive assumption, whence $M\in\mathcal A$ by the Eklof lemma.

If $\kappa>\mu$ is singular, we easily construct, for each regular $\lambda<\kappa$ such that $\mu<\lambda$, a system $\mathcal S_\lambda$ consisting of subobjects of the short exact sequence $\mathcal E$ whose first term is a canonical direct summand of $A_1$ of cardinality $<\lambda$, the second term is in $\mathcal A^{<\lambda}$, and such that $\bigcup\mathcal S_\lambda = \mathcal E$ and $\mathcal S_\lambda$ is closed under unions of chains of cardinality $<\lambda$. The third terms of short exact sequences in $\mathcal S_\lambda$ then belong to ${}^\perp K^{(\mu)}$, and thus to $\mathcal A = \hbox{Filt}(\mathcal A^{\leq\mu})$ by the inductive hypothesis. We use the Singular Compactness Theorem, \cite[Theorem~7.29]{GT}, to deduce that $M\in\mathcal A$. This settles the case $n = 1$.

Finally, assume that $n>1$. Then the first syzygy, $\Omega(M)$, in an $\mathcal A$-resolution of $M$ of length $n$ has $\mathcal A$-resolution dimension $\leq n-1$. By the inductive hypothesis, $\Omega(M) \in\mathcal A$ which immediately implies that $M\in\mathcal A$, too.
\end{proof}

\begin{remark}\label{higherpd} Dimension shifting yields a variant of Theorem \ref{boundedpd} for higher $\mathcal A$-resolution dimensions: if $0 \leq d < n$, then $M$ has $\mathcal A$-resolution dimension $\leq d$, iff $\Ext iRM{K^{(\mu)}} = 0$ for all $d < i \leq n$.   

Also, it follows from Theorem \ref{boundedpd} that if $M$ is a module of $\Add K$-resolution dimension $\leq n$, then $M \in \Add K$, iff 
$\Ext iRM{K^{(\mu)}} = 0$ for all $0 < i \leq n$.

All tilting cotorsion pairs, including, of course, the trivial one $(\mathcal P_0,\ModR)$, satisfy the assumptions of Theorem~\ref{boundedpd}. More generally, all hereditary cotorsion pairs with the right-hand class closed under direct limits (cf.\ \cite[Lemma~5.4]{AST}). Other examples include, for instance, the cotorsion pair $\mathfrak{PGF} = (\mathcal{PGF},\mathcal{PGF}^\perp)$ from \cite{SS}. In particular, if $R$ is a ring over which each module has finite $\mathcal{PGF}$-resolution dimension, then $\Phi$ implies the existence of a test module for projectivity in $\ModR$: indeed, by Theorem~\ref{boundedpd}, $\mathfrak{PGF}$ is cogenerated by a~single module. At the same time, the flat cotorsion pair $(\mathcal F_0,\mathcal{EC})$ is also cogenerated by a~single module, and the flat modules in $\mathcal{PGF}$ are precisely the projective ones.
\end{remark}

As the following proposition shows, the Weak Diamond Principle can help us also in another special case which covers the setting of Theorem~\ref{Dedekind} and complements part $(2)$ therein.

\begin{proposition}\label{p:I1} Let $\mathfrak C = (\mathcal A,\mathcal B)$ be a cotorsion pair with $\mathcal B\subseteq\mathcal I_1$. Assuming $\Phi$, $\mathfrak C$ is generated by a set if and only if $\mathfrak C$ is cogenerated by a set.
\end{proposition}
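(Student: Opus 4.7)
My plan rests on two preliminary observations that follow immediately from $\mathcal B \subseteq \mathcal I_1$ and are used in both directions. \emph{(i) $\mathcal A$ is closed under submodules.} For $M \in \mathcal A$, $N \leq M$, and $B \in \mathcal B$, fix an injective coresolution $0 \to B \to E_0 \to E_1 \to 0$ with $E_0, E_1$ injective (possible since $\id B \leq 1$). Any $f\colon N \to E_1$ extends to $\tilde f\colon M \to E_1$ by injectivity of $E_1$, then lifts to $g\colon M \to E_0$ using $\Ext 1RMB = 0$; restricting $g$ to $N$ gives a lift of $f$, whence $\Ext 1RNB = 0$. \emph{(ii) Every module has $\mathcal A$-resolution dimension $\leq 1$ (in particular, $\mathfrak C$ is hereditary).} From any presentation $0 \to K \to F \to N \to 0$ with $F$ free, $\Ext 1RKB = \Ext 2RNB = 0$ for each $B \in \mathcal B$ (by $\id B \leq 1$), so $K \in \mathcal A$.

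\emph{Cogenerated by a set $\Rightarrow$ generated by a set.} Replacing a cogenerating set by its direct sum, we have $\mathcal A = {}^\perp C$ for a single module $C$. Setting $\mu = |C| + |R| + \aleph_0$, I would show $\mathcal A = \hbox{Filt}(\mathcal A^{\leq\mu})$ by induction on $|M|$ for $M \in \mathcal A$, and conclude via Lemma~\ref{setgen}. The main (regular) inductive step at $|M| = \kappa > \mu$ runs as follows. Take any continuous filtration $(M_\alpha)_{\alpha<\kappa}$ of $M$ by $<\kappa$-generated submodules; by (i) each $M_\alpha \in \mathcal A$. The contrapositive of Lemma~\ref{l:wdiam} applied with $A = M$, $B = C$, using $\Phi_\kappa$, yields that $\{\alpha < \kappa : \Ext 1R{M_{\alpha+1}/M_\alpha}C \neq 0\}$ is nonstationary; restricting the filtration to a club on which consecutive factors lie in $\mathcal A^{<\kappa}$, then invoking the inductive hypothesis (combined with Hill's Lemma as needed to arrange that consecutive factors of the subfiltration are themselves $\mathcal A^{\leq\mu}$-filtered) completes the step. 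The singular case is settled by the Singular Compactness Theorem \cite[Theorem~7.29]{GT}, just as in the proof of Theorem~\ref{boundedpd}.

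\emph{Generated by a set $\Rightarrow$ cogenerated by a set.} By Lemma~\ref{setgen}, $\mathcal A = \hbox{Filt}(\mathcal A^{\leq\mu})$ for some $\mu \geq |R| + \aleph_0$, and by (ii) every module has $\mathcal A$-resolution dimension $\leq 1$. The strategy is to apply Theorem~\ref{boundedpd} with $n = 1$, which will give $\mathcal A = {}^\perp K^{(\mu)}$ as soon as we exhibit a module $K$ with $\mathcal A \cap \mathcal B = \Add K$. To construct $K$, for each representative $S$ of $\mathcal A^{\leq\mu}$ take a special $\mathcal B$-preenvelope $0 \to S \to B_S \to A_S \to 0$ of bounded cardinality; since $S, A_S \in \mathcal A$ we have $B_S \in \mathcal A \cap \mathcal B$. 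Setting $K = \bigoplus_S B_S$, the main obstacle is verifying $\mathcal A \cap \mathcal B \subseteq \Add K$: an $M \in \mathcal A \cap \mathcal B$ must be split off a copy of $K^{(J)}$, which I would handle along a $\mu$-filtration of $M$ (whose terms lie in $\mathcal A$ by (i)) using $\Phi$ via Lemma~\ref{l:wdiam}, mirroring the regular case in the proof of Theorem~\ref{boundedpd}. The precise identification of the additive generator of $\mathcal A \cap \mathcal B$ is the delicate step in this direction, where the interaction of $\Phi$ with the $\mathcal I_1$-bound is most crucial.
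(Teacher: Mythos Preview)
Your treatment of the direction ``cogenerated by a set $\Rightarrow$ generated by a set'' is correct and matches what the paper does (it simply cites \cite[Theorem~11.2]{GT}, whose proof is exactly your induction on $|M|$ via Lemma~\ref{l:wdiam} and Singular Compactness, using that $\mathcal A$ is closed under submodules).

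The gap is in the other direction. Your plan is to reduce to Theorem~\ref{boundedpd}, but that theorem has the hypothesis $\mathcal A\cap\mathcal B=\Add K$, and you do not establish it. Your candidate $K=\bigoplus_S B_S$ certainly satisfies $\Add K\subseteq\mathcal A\cap\mathcal B$, and small modules in $\mathcal A\cap\mathcal B$ are summands of $K$; but for an arbitrary $M\in\mathcal A\cap\mathcal B$ you propose to run a $\mu$-filtration and invoke Lemma~\ref{l:wdiam}. The problem is that the terms $M_\alpha$ of such a filtration lie in $\mathcal A$ (by your observation~(i)) but there is no reason for them, or for the factors $M_{\alpha+1}/M_\alpha$, to lie in $\mathcal B$, so you cannot feed them back into $\Add K$. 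Moreover, your $K$ is defined in terms of $\mu$ while Theorem~\ref{boundedpd} needs $\mu\geq|K|$, so the construction is circular at the level of cardinals; untangling this would force an iterated cardinal climb---which is precisely what the paper does, but \emph{without} ever needing $\mathcal A\cap\mathcal B$ to have a single additive generator.

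The paper's argument for ``generated $\Rightarrow$ cogenerated'' is genuinely different and avoids Theorem~\ref{boundedpd} altogether. It first fixes, for each cardinal $\mu$, a module $W_\mu\in\mathcal B$ that tests membership in $\mathcal A$ for all $<\mu$-presented modules (a product over representatives of $<\mu$-presented modules not in $\mathcal A$). Then it builds a continuous increasing chain of cardinals $(\lambda_\alpha\mid\alpha\leq\kappa^+)$ with $\lambda_{\alpha+1}=(\lambda_\alpha+|W_{\lambda_\alpha}|)^\kappa$, and takes $W$ to be the \emph{bounded-support} subproduct of $\prod_{\alpha<\kappa^+}W_{\lambda_\alpha}$ (supports of size $\leq\kappa$). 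The bounded support is what keeps $W\in\mathcal B$, using $\mathcal A=\hbox{Filt}(\mathcal A^{\leq\kappa})$. One then shows $\mathcal A={}^\perp W$ by the same induction on cardinality (Lemma~\ref{l:wdiam} at regular stages, Singular Compactness at singular ones), using $\mathcal B\subseteq\mathcal I_1$ only to know that ${}^\perp W$ is closed under submodules. This direct construction is the missing idea in your proposal.
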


\begin{proof} The if part follows e.g.\ by \cite[Theorem~11.2]{GT} where $\Phi$ and our Lemma~\ref{l:wdiam} play the role of $\Psi$ and \cite[Lemma~11.1]{GT}. Let us prove the only-if part.

Denote by $\kappa$ the least infinite cardinal such that $\kappa\geq |R|$ and $\mathcal A = \hbox{Filt}(\mathcal A^{\leq\kappa})$. For each infinite cardinal $\mu$, fix a module $W_\mu\in\mathcal B$ such that a $<\mu$-presented module $M$ belongs to $\mathcal A$ if and only if $M\in {}^\perp W_\mu$. The module $W_\mu$ can be defined e.g.\ as $\prod_{M\in \mathcal S} W_M$ where $\mathcal S$ is the representative set of all $<\mu$-presented modules not belonging to $\mathcal A$ and $W_M\in\mathcal B$ is such that $\Ext 1RM{W_M} \neq 0$.

We construct a continuous increasing sequence $(\lambda_\alpha\mid \alpha\leq\kappa^+)$ of cardinals: we start by putting $\lambda_0 = \kappa^+$; if $\alpha$ is limit, we put $\lambda_\alpha = \sum_{\beta<\alpha}\lambda_\beta$. Finally, if $\lambda_\alpha$ is defined and $\alpha<\kappa^+$, we set $\lambda_{\alpha+1} = (\lambda_\alpha + |W_{\lambda_\alpha}|)^\kappa$. Put $\lambda = \lambda_{\kappa^+}$. By the definition, we see that $\lambda^\kappa = \lambda$.

Let $W$ denote the submodule of $\prod_{\alpha<\kappa^+}W_{\lambda_\alpha}$ consisting of elements of bounded support (i.e., of support of cardinality $\leq\kappa$). Then $W\in\mathcal B$ since $\mathcal A = \hbox{Filt}(\mathcal A^{\leq\kappa})$. Also $|W| = \lambda^\kappa = \lambda$. We claim that $\mathcal A = {}^\perp W$. Clearly, $\mathcal A\subseteq {}^\perp W$. Let $M\in {}^\perp W$ be arbitrary. We shall prove that $M\in\mathcal A$ by induction on $\theta = |M|$.

If $\theta<\lambda$, then $M\in\mathcal A$ immediately follows by the definition of modules $W_{\lambda_\alpha}$, $\alpha<\kappa^+$. Assume that $N\in {}^\perp W \Rightarrow N\in\mathcal A$ holds for all modules $N$ of cardinality $<\theta$ for some $\theta\geq\lambda$.

First, consider the case of regular $\theta = |M|$ where $M\in {}^\perp W$. Since $W\in\mathcal I_1$ by our assumption, ${}^\perp W$ is closed under submodules, and we can use Lemma~\ref{l:wdiam} to infer that there is a ${}^\perp W$-filtration of $M$ consisting of modules of cardinality $<\theta$. By the inductive assumption, this is actually an $\mathcal A$-filtration, whence $M\in\mathcal A$ by the Eklof lemma.

If $\theta$ is singular, we use the inductive hypothesis and $W\in\mathcal I_1$ to deduce that, for each regular $\mu<\theta$ with $\kappa<\mu$, the system $\mathcal S_\mu$ of submodules in $M$ of cardinality $<\mu$ consists of modules from $\mathcal A = \hbox{Filt}(\mathcal A^{\leq\kappa})$. Consequently, we can use Singular Compactness Theorem \cite[Theorem~7.29]{GT}, to infer that $M\in\mathcal A$.
\end{proof}

Combining the results above, we obtain
      
\begin{corollary}\label{flatbounded} Let $R$ be a non-right perfect ring such that each flat module has finite projective dimension. Then the assertion {\lq \lq}There exists an epimorphism $\pi$ such that $\mathcal P _0$ is the cofactorization class of $\pi${\rq \rq} is independent of ZFC. 

In particular, this is the case when 
\begin{itemize}
\item $R$ is a commutative noetherian ring of non-zero finite Krull dimension,
\item $R$ is an $n$-Iwanaga-Gorenstein ring for $n \geq 1$,
\item $R$ is an almost perfect ring.
\end{itemize}
\end{corollary}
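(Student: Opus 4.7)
The statement has two consistency halves plus the verification of the three sample rings. The negative direction (no such $\pi$ exists) is immediate from Lemma~\ref{sup}: since $R$ is not right perfect, under SUP (consistent with ZFC) the set $\mathcal S = \{\Ker\pi\}$ produces a module $M$ with $\pd M = 1$ and $\Ext 1 R M{\Ker\pi} = 0$, and such an $M$ lies in the cofactorization class of $\pi$ but not in $\mathcal P_0$.

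For the positive direction, the plan is to work under the Weak Diamond Principle $\Phi$ (consistent with ZFC since it follows from V = L, as recalled before Lemma~\ref{l:wdiam}) and produce a single module $W$ with $\mathcal P_0 = {}^\perp W$; then Lemma~\ref{setcogen}, equivalence (1)$\Leftrightarrow$(4), supplies the desired epimorphism. Build $W = U\oplus\prod_{i\geq 0} C_i$, combining a flatness test with a projectivity test on modules of finite projective dimension. For the flatness test, take $U = \prod_N \Hom{\mathbb Z}{N}{\mathbb Q/\mathbb Z}$, where $N$ ranges over a representative set of cyclic left $R$-modules; standard Tor--Ext duality via character modules gives $\mathcal F_0 = {}^\perp U$ in ZFC. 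For the projectivity test, apply Theorem~\ref{boundedpd} to the hereditary cotorsion pair $(\mathcal P_0,\ModR)$ with $K = R$ and $\mu = |R|+\aleph_0$: one has $\mathcal P_0\cap\ModR = \mathcal P_0 = \Add R$, and Kaplansky's theorem yields $\mathcal P_0 = \hbox{Filt}(\mathcal P_0^{\leq\mu})$; the theorem then concludes that, for any $M$ of finite projective dimension, $M\in\mathcal P_0$ iff $\Ext i R M{R^{(\mu)}} = 0$ for all $i\geq 1$. Take an injective coresolution of $R^{(\mu)}$ with cosyzygies $C_i$ satisfying $\Ext 1 R M{C_i}\cong \Ext{i+1}{R}{M}{R^{(\mu)}}$ by dimension shifting. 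Since $\Ext 1 R M -$ preserves products in the second argument, $\Ext 1 R M W = 0$ is equivalent to $M\in\mathcal F_0$ together with $\Ext i R M{R^{(\mu)}} = 0$ for all $i\geq 1$; the flat-pd hypothesis then forces this combination to coincide with $M\in\mathcal P_0$, giving $\mathcal P_0 = {}^\perp W$ as required.

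For the three concrete rings, non-right perfectness is clear (otherwise Corollary~\ref{perfect} already settles the question), and the flat-pd hypothesis is classical: over a commutative noetherian ring of finite Krull dimension $d$, every flat has $\pd \leq d$ by Jensen / Gruson--Raynaud; over an $n$-Iwanaga--Gorenstein ring every flat has $\pd \leq n$ because every module of finite flat dimension does; and over an almost perfect ring every flat has $\pd \leq 1$ by Bazzoni--Salce.

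The main obstacle is passing from the pointwise conclusion of Theorem~\ref{boundedpd} (one $n = \pd M$ per module) to a single universal test module: because the hypothesis supplies only pointwise, not uniform, finiteness of projective dimension on flats, $W$ must incorporate the infinite product $\prod_{i\geq 0} C_i$ of cosyzygies rather than a finite sum. Product-exactness of $\Ext 1 R M -$ in the second slot makes this combination painless once the setup is in place, so beyond Theorem~\ref{boundedpd} and the character-module flatness test, no genuinely new ingredient is needed.
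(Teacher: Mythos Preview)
Your proof is correct and follows essentially the same path as the paper: Lemma~\ref{sup} under SUP for one direction, Theorem~\ref{boundedpd} under $\Phi$ combined with the flatness test (the paper's $g_0$ from Corollary~\ref{flatetc}, your character module $U$) for the other, then the standard references for the three examples. One small remark: your infinite product $\prod_{i\geq 0} C_i$ is cautious but unnecessary, since the hypothesis that every flat module has finite projective dimension automatically yields a \emph{uniform} bound~$n$ (a direct sum $\bigoplus_k F_k$ of flat modules with $\pd F_k = k$ would itself be flat yet of infinite projective dimension), which is why the paper can use the finite sum $\bigoplus_{0<i\leq n}\Omega^{-i}(R^{(\mu)})$ instead.
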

\begin{proof} Consistency of the failure of the assertion under SUP follows by Lemma \ref{sup}. 

Assuming $\Phi$, we can apply Theorem \ref{boundedpd} for $K = R$ and $\mathfrak C = (\mathcal P_0,\ModR)$ to obtain the epimorphism $\pi : \bigoplus_{0 < i \leq n} E(\Omega^{-i}(R^{(R)})) \to E(\Omega^{-i}(R^{(R)}))/\Omega^{-i}(R^{(R)})$, where $\Omega^{-i}(R^{(R)})$ denotes the $i$th cosyzygy of $R^{(R)}$,  which tests for projectivity of modules of finite projective dimension. From our assumption that all flat modules have finite projective dimension it follows that $\mathcal P _0$ is the cofactorization class of the epimorphism $\pi \oplus g_0$, where $g_0$ is the epimorphism from Corollary \ref{flatetc}.      

Finally, flat modules over a commutative noetherian ring of Krull dimension $d < \omega$ have projective dimension $\leq d$ by a classic result of Gruson and Jensen (cf.\ \cite[4.2.8]{X}). If $R$ is $n$-Iwanaga-Gorenstein, then $\pd M \leq n$ for each module of finite flat dimension by \cite[9.1.10]{EJ}. If $R$ is almost perfect, then all flat modules have projective dimension $\leq 1$ by \cite[7.1]{FS} (see also \cite{FN}). 
\end{proof}

\begin{remark}\label{dualbaer} By Remark \ref{variants}.1, the DBC holds for each right perfect ring, that is, $g : R^{T} \to \prod_{I \in T} I$ where $T$ is the set of all right ideals of $R$, is a test epimorphism for projectivity.

By Lemma \ref{sup}, it is consistent with ZFC that the DBC fails for each non-right perfect ring. In fact, in contrast with Corollary \ref{flatbounded}(2), DBC fails (in ZFC) for all commutative noetherian rings that are not perfect (i.e., have Krull dimension $\geq 1$) by \cite{H}. However, there do exist commutative hereditary rings for which DBC is independent of ZFC, \cite{T2}.
\end{remark}

\section{Projective modules and large cardinals}

The question of whether it is consistent with ZFC that there exists a test epimorphism for projectivity over any ring remains open. Apart from using the Weak Diamond Principle, another possible approach here is to utilize large cardinals; in particular the strongly compact ones. Recall that an uncountable cardinal $\kappa$ is called \emph{strongly compact} provided that each $\kappa$-complete filter (on any set $I$) can be extended to a $\kappa$-complete ultrafilter.

In \cite{Sa}, it is shown that, assuming the existence of a strongly compact $\kappa > |R|$, there exists a free $R$-module $F$ such that all projective modules belong to $\Prod(F)$. It is straightforward to check then that a module $M$ of finite projective dimension such that $\Ext iRMF = 0$ for all $i>0$ is necessarily projective. As a result, in the proof above, one can alternatively assume that there exists a proper class of strongly compact cardinals, instead of $\Phi$. We should note, however, that consistency results not relying on large cardinals are usually considered more valuable.

Nonetheless, using strongly compact cardinals, we can get to a situation which resembles the case of right perfect rings where a test module for projectivity always exists. We need just one more preparatory lemma inspired by \cite[Proposition~2.31]{AR}. In its proof, we use the characterization of $\lambda$-pure monomorphisms via solvability of systems of $R$-linear equations, cf.\ \cite[Exercise~XIII.7.2]{FS0}. Also recall that, given a~filter $\mathcal F$ on $I$, \emph{a reduced power} $B^I/\mathcal F$ is defined as $B^I/Z_\mathcal F$ where $Z_\mathcal F = \{f\in B^I\mid \{i\in I\mid f(i) = 0\}\in\mathcal F\}$. If $\mathcal F$ is actually an ultrafilter, then $B^I/\mathcal F$ is called \emph{an ultrapower} of $B$.

\begin{lemma}\label{l:pureepi} Let $\lambda$ be an infinite regular cardinal, $f:A\to B$ a $\lambda$-pure monomorphism which is the colimit of a $\lambda$-directed system $(f_i^\prime:A_i^\prime\to B_i^\prime \mid i\in I)$ of morphisms between $<\!\lambda$-presented modules. Let $\mathcal F$ be an arbitrary $\lambda$-complete filter on $I$ containing all the sets $\uparrow\! i = \{j\in I\mid i\leq j\}$ where $i$ runs through $I$. Then there exists a $\lambda$-pure embedding of $\Coker f$ into the reduced power $B^I/\mathcal F$.
\end{lemma}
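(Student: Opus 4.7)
The plan is to build the $\lambda$-pure embedding $\psi : C \to B^I/\mathcal F$, where $C := \Coker f$, by gluing together lifts of the canonical colimit maps and then to verify $\lambda$-purity via the equational characterization of $\lambda$-pure monomorphisms. Setting up auxiliary data: since colimits commute with cokernels, $C = \mathrm{colim}_{i \in I} C'_i$ where $C'_i := \Coker{f'_i}$ is $<\!\lambda$-presented (as $A'_i, B'_i$ are), with canonical maps $\kappa_i : C'_i \to C$. Denoting by $\pi : B \to C$ the projection, the $\lambda$-purity of $f$ applied to each $<\!\lambda$-presented $C'_i$ supplies homomorphisms $t_i : C'_i \to B$ with $\pi \circ t_i = \kappa_i$ for every $i \in I$.

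I would then define $\psi$ as follows: for $c \in C$, fix $i_0 \in I$ and $c^{i_0} \in C'_{i_0}$ with $\kappa_{i_0}(c^{i_0}) = c$; for $j \geq i_0$ let $c^j \in C'_j$ denote the transport of $c^{i_0}$ along the directed system, and put $\psi(c)$ equal to the class in $B^I/\mathcal F$ of the function $j \mapsto t_j(c^j)$ on $\uparrow\! i_0$ (extended arbitrarily elsewhere). Two presentations $\kappa_{i_0}(c^{i_0}) = \kappa_{i_1}(c^{i_1}) = c$ become identified at some stage $j^* \geq i_0, i_1$ in the directed colimit, so the two candidate functions agree on $\uparrow\! j^* \in \mathcal F$, making $\psi(c)$ independent of choices. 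Choosing a common index for finitely many operands gives $R$-linearity, and injectivity is immediate: if $\psi(c) = 0$, pick $j$ in the nonempty intersection (in $\mathcal F$) of $\uparrow\! i_0$ with a witness set of vanishing, so $c = \kappa_j(c^j) = \pi(t_j(c^j)) = 0$.

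To verify $\lambda$-purity of $\psi$, I would invoke the characterization via solvability of $<\!\lambda$-sized systems of $R$-linear equations. Suppose the system $\sum_l r_{ul}\, y_l = c_u$ ($u \in U$, $|U| < \lambda$, $c_u \in C$, with $n < \lambda$ unknowns) has a solution $(h_l) \in (B^I/\mathcal F)^n$ after identifying $C$ with $\psi(C)$. Lift each $h_l$ to $\tilde h_l \in B^I$ and use the canonical lifts $\tilde\psi(c_u) \in B^I$ from the construction of $\psi$, so that $\tilde\psi(c_u)(j) = t_j(c_u^j)$ on $\uparrow\! i(u)$. For each $u$, the set $J_u := \{j \in I : \sum_l r_{ul}\, \tilde h_l(j) = \tilde\psi(c_u)(j)\}$ lies in $\mathcal F$; by $\lambda$-completeness of $\mathcal F$, the intersection $J := \bigcap_u (J_u \cap \uparrow\! i(u))$ is in $\mathcal F$, hence nonempty. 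For any $j \in J$, applying $\pi$ to $\sum_l r_{ul}\, \tilde h_l(j) = t_j(c_u^j)$ gives $\sum_l r_{ul}\, \pi(\tilde h_l(j)) = \kappa_j(c_u^j) = c_u$ in $C$, the desired solution.

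The main obstacle is choosing the right $\psi$: the naive diagonal $B \to B^I/\mathcal F$ does not annihilate $f(A)$, so the construction must instead use the local lifts $t_j$ of $\kappa_j$ (provided by the $\lambda$-purity of $f$), patched together using the assumption $\uparrow\! i \in \mathcal F$ for all $i \in I$. Once $\psi$ is in hand, the $\lambda$-purity verification is routine, relying only on $\lambda$-completeness of $\mathcal F$ to intersect a family of $<\!\lambda$ many good sets and on the relation $\pi \circ t_j = \kappa_j$ to push a solution in $B$ at a single index $j$ down to a solution in $C$.
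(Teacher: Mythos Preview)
Your argument is correct and follows the same overall strategy as the paper --- build a map $C\to B^I/\mathcal F$ from local data supplied by $\lambda$-purity, then verify $\lambda$-purity via the equational criterion and $\lambda$-completeness of $\mathcal F$ --- but the technical implementation differs. The paper, following \cite[Proposition~2.30(ii)]{AR}, first replaces the given system by push-outs so as to obtain a $\lambda$-directed system of \emph{split} monomorphisms $f_i:A\to B_i$ with retractions $g_i$, and then defines $\nu$ as the map induced on $\Coker f$ by the difference $p-q$ of the diagonal $p:B\to B^I/\mathcal F$ and a ``project-and-reinclude'' map $q$ built from the $g_i$. You bypass the push-out step entirely: you work with the cokernels $C'_i=\Coker{f'_i}$ of the original system, use $\lambda$-purity of $f$ directly (applied to the $<\lambda$-presented $C'_i$) to get lifts $t_i:C'_i\to B$ of the colimit maps $\kappa_i$, and patch the $t_i$ together in $B^I/\mathcal F$. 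Your route is a little more elementary in that it avoids the coma-category machinery; the paper's route has the advantage that the map is visibly $p-q$ for two globally defined maps $B\to B^I/\mathcal F$, so well-definedness is automatic rather than requiring the separate check you carry out. The $\lambda$-purity verifications are essentially identical in both proofs.
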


\begin{proof} As in the proof of \cite[Proposition~2.30(ii)]{AR}, we use push-outs and $\lambda$-purity of $f$ to get a $\lambda$-directed system $\mathcal S = (f_i:A\to B_i, b_{ji}:B_i\to B_j\mid i\leq j\in I)$ in the coma-category $A\downarrow\ModR$ consisting of split monomorphisms and such that $\varinjlim \mathcal S = f$. Let us denote by $b_i:B_i\to B$, $i\in I$, the colimit maps (so $b_if_i = f$) and by $g_i:B_i\to A$ the retractions, i.e.\ $g_if_i = 1_A$.

Let $p:B\to B^I/\mathcal F$ be the diagonal embedding of $B$ into the reduced power. We also define the morphism $q:B\to B^I/\mathcal F$ by first picking, for each $b\in B$, an index $i\in I$ and $c\in B_i$ such that $b_i(c) = b$, and then setting $q(b) = [(u_j)_{j\in I}]_{\mathcal F}$ where $u_j = b_jf_jg_jb_{ji}(c)$ if $i\leq j$ and $u_j = 0$ otherwise. By the assumption on $\mathcal F$, this is a~correctly defined homomorphism.

It is easy to check that $pf = qf$, whence we get a unique morphism $\nu: \Coker f\to B^I/\mathcal F$ such that $p - q = \nu\pi$ where $\pi:B\to \Coker f$ is the canonical projection. It remains to verify that $\nu$ is a $\lambda$-pure monomorphism.

Let $\Omega$ be a system of $R$-linear equations with parameters from $\Coker f$ such that $\tau:=|\Omega|<\lambda$. Since $\mathcal S$ is $\lambda$-directed, there exists $i\in I$ such that each parameter in $\Omega$ is of the form $\pi b_i(c)$ for some $c\in B_i$. Let us enumerate in $(c_\alpha\mid \alpha<\tau)$ all these elements $c$. Assume that the system $\Omega$ with parameters $(\nu\pi b_i(c_\alpha)\mid \alpha<\tau)$ has a~solution in $B^I/\mathcal F$.
By $\lambda$-completeness of $\mathcal F$ and the other assumption we have put on it, there exists $j\in I, i\leq j$ such that $\Omega$ with parameters $(b_i(c_\alpha)- b_jf_jg_jb_{ji}(c_\alpha)\mid \alpha<\tau)$ has a~solution in $B$. Taking the $\pi$-image of this solution (note that $\pi b_jf_j = \pi f = 0$), we obtain a~desired solution of $\Omega$ in $\Coker f$, showing that $\nu$ is a $\lambda$-pure monomorphism.
\end{proof}

\begin{remark} $(1)$ Given a $\lambda$-pure monomorphism $f$, there is always a $\lambda$-directed system $(f_i^\prime\mid i\in I)$ consisting of morphisms between $<\lambda$-presented modules such that its directed colimit is $f$. Also note that the filter on $I$ generated by the sets $\uparrow i$ is $\lambda$-complete, whence there always exists a~filter $\mathcal F$ satisfying the assumption of Lemma~\ref{l:pureepi}.

\smallskip

\noindent $(2)$ Since Lemma~\ref{l:pureepi} holds also for $\lambda = \aleph_0$, it gives a direct proof of the well-known fact that each definable class of modules (i.e.\ closed under directed colimits, pure submodules and products) is closed under pure-epimorphic images. Note that the reduced power $B^I/\mathcal F$ is the directed colimit of powers of $B$ and canonical epimorphisms between them.
\end{remark}

Now we are ready to prove the promised result. Recall that, given a regular infinite cardinal $\kappa$, a category $\mathcal K$ is called \emph{$\kappa$-accessible}, if it has $\kappa$-directed colimits and there is a set $\mathcal S$ of $<\kappa$-presented objects such that every object is a $\kappa$-directed colimit of objects from $\mathcal S$. For instance, $\rmod{R}$ is an $\aleph_0$-accessible category for any ring $R$ since each module is a direct limit of finitely presented modules. % For the definition of a $\kappa$-projective module, we refer to \cite[Definition~3.17]{GT}.

\begin{theorem}\label{t:strongcomp1} Let $\kappa$ be a strongly compact cardinal and $R$ a ring such that $|R|<\kappa$. Then each $\kappa$-pure epimorphism $g:B\to C$ with $B$ projective splits. In particular, the category $\mathcal P_0$ of all projective $R$-modules and $R$-homomorphisms is $\kappa$-accessible.
\end{theorem}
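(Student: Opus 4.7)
The plan is to prove the splitting claim first, from which the $\kappa$-accessibility of $\mathcal P_0$ follows at once: for a $\kappa$-directed colimit $P = \varinjlim_i P_i$ of projective modules, the canonical map $\bigoplus_i P_i\twoheadrightarrow P$ would be $\kappa$-pure (any morphism from a $<\kappa$-presented module into $P$ factors through some $P_i$, a split summand of the sum), so by the splitting it would split, identifying $P$ as a direct summand of a projective and hence projective. Kaplansky's theorem then expresses each projective as a direct sum of countably generated---equivalently, since $\kappa$ is strongly inaccessible and $|R|<\kappa$, $<\kappa$-presented---projectives, providing a set of $\kappa$-presentable generators.

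For the splitting, given a $\kappa$-pure epimorphism $g:B\to C$ with $B$ projective, set $K = \ker g$. The inclusion $K\hookrightarrow B$ is a $\kappa$-pure monomorphism, and by Remark~(1) after Lemma~\ref{l:pureepi} it is the directed colimit of a $\kappa$-directed system indexed by a set $I$ whose filter of tails is $\kappa$-complete. I would extend this filter, via strong compactness of $\kappa$, to a $\kappa$-complete ultrafilter $\mathcal U$ on $I$, and then invoke Lemma~\ref{l:pureepi} to obtain a $\kappa$-pure embedding $\nu:C\hookrightarrow B^I/\mathcal U$ satisfying $\pi\circ\nu = d_C$ for the natural projection $\pi:B^I/\mathcal U\twoheadrightarrow C^I/\mathcal U$ (the associated map $q$ in the Lemma's construction lands in $K^I/\mathcal U = \ker\pi$, while the associated diagonal $p = d_B$ projects to $d_C\circ g$). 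Exploiting the description of $B^I/\mathcal U$ as a $\kappa$-directed colimit of powers $B^Z$ ($Z\in\mathcal U$) from the remark after Lemma~\ref{l:pureepi}, each restriction $\nu|_{C'}$ for a $<\kappa$-generated submodule $C'\subseteq C$ lifts to some $\tilde\nu_{C'}:C'\to B^Z$; the condition $\pi\circ\nu = d_C$ then places $\{z\in Z:g(\pi_z\tilde\nu_{C'}(c))=c\}$ in $\mathcal U$ for each $c\in C'$, and intersecting these $<\kappa$-many $\mathcal U$-large sets by $\kappa$-completeness yields a common $z$ for which $s_{C'}:=\pi_z\circ\tilde\nu_{C'}:C'\to B$ is a section of $g$ restricted to $C'$.

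The final step is to assemble the family $\{s_{C'}\}$ into a global section. I would index by the $\kappa$-directed poset $J$ of $<\kappa$-generated submodules of $C$, extend its filter of tails to a $\kappa$-complete ultrafilter $\mathcal V$ on $J$ via strong compactness, and invoke Kaplansky's theorem to decompose $B = \bigoplus_\alpha B_\alpha$ with each $|B_\alpha|<\kappa$. Since $|B_\alpha|<\kappa$ and $\mathcal V$ is $\kappa$-complete, for each $c\in C$ and each $\alpha$ the map $C'\mapsto p_\alpha(s_{C'}(c))\in B_\alpha$ is eventually constant modulo $\mathcal V$, defining $h_\alpha(c)\in B_\alpha$. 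A support-tracking argument using $\aleph_1$-completeness of $\mathcal V$ shows that $|\mathrm{supp}_B(s_{C'}(c))|$ stabilizes at some $n<\omega$ modulo $\mathcal V$, bounding $|\{\alpha:h_\alpha(c)\neq 0\}|$ by $n$ and placing $h(c):=\sum_\alpha h_\alpha(c)$ in $B$; a comparison of supports then gives $s_{C'}(c) = h(c)$ on a $\mathcal V$-large set, so $g(h(c)) = c$, and $R$-linearity of $h$ follows from that of the $s_{C'}$'s on common subdomains. The main obstacle is this support-tracking step, which depends essentially on the Kaplansky decomposition (enabled by projectivity of $B$), on $|R|<\kappa$, and on the $\kappa$-completeness of $\mathcal V$ derived from strong compactness.
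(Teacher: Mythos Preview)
Your argument shares its opening with the paper's---invoking Lemma~\ref{l:pureepi} with a $\kappa$-complete ultrafilter $\mathcal U$ to obtain the $\kappa$-pure embedding $\nu:C\hookrightarrow B^I/\mathcal U$, and deriving $\kappa$-accessibility of $\mathcal P_0$ from the splitting claim---but then diverges, and the gluing step contains a genuine gap. You correctly stabilize $|\mathrm{supp}_B(s_{C'}(c))|$ at some $n$ via $\aleph_1$-completeness and bound $|\{\alpha:h_\alpha(c)\neq 0\}|\leq n$, but the asserted ``comparison of supports'' giving $s_{C'}(c)=h(c)$ on a $\mathcal V$-large set does not follow: for that you would need the map $C'\mapsto\mathrm{supp}(s_{C'}(c))\in[\Lambda]^n$ (where $\Lambda$ indexes the Kaplansky decomposition) to be $\mathcal V$-almost constant, and $\kappa$-completeness only forces this when the target has cardinality $<\kappa$. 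Since $|\Lambda|$ may well be $\geq\kappa$, the supports can drift, yielding $h_\alpha(c)=0$ for every $\alpha$ even though each $s_{C'}(c)$ is a nonzero preimage of~$c$; then $g(h(c))=0\neq c$, and the $R$-linearity of $h$ (which you also derive from the same claim) is likewise unproven. Nothing in your construction of the local sections---each arising from an independently chosen lift $\tilde\nu_{C'}$ and coordinate $z$---supplies the coherence needed to rule this out.

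The paper avoids the gluing problem altogether. After producing $\nu:C\hookrightarrow B^I/\mathcal U$, it invokes \cite[Theorem~II.3.8]{EM} to see that a $\kappa$-complete ultrapower of a free module over a ring of size $<\kappa$ is again free, and then \emph{iterates}: apply Lemma~\ref{l:pureepi} to $\Coker\nu$, then to the next cokernel, and so on, splicing the short exact sequences into an unbounded pure-exact complex $\cdots\to 0\to B\to B^I/\mathcal U\to\cdots$ all of whose terms are free. Such a complex is contractible by \cite[Theorem~2.5]{BG}, so in particular the cycle $C$ is projective and $g$ splits. This is both shorter and sidesteps the need for any section-patching; the extra information $\pi\circ\nu=d_C$ that you extract from Lemma~\ref{l:pureepi} is never used.
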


\begin{proof} We can assume without loss of generality that $B$ is actually a free module. Using the lemma above for $\kappa = \lambda$, we can find a $\kappa$-pure embedding $h:C\to B^I/\mathcal U$ with $\mathcal U$ a~$\kappa$-complete ultrafilter; remember that each $\kappa$-complete filter extends to a~$\kappa$-complete ultrafilter. By \cite[Theorem~II.3.8]{EM}, the ultrapower $B^I/\mathcal U$ is a free module as well. We repeat the process with $\Coker h$, and so on, and eventually obtain an unbounded pure-exact complex consisting of projective (even free) modules which is well known to be contractible, cf.\ \cite[Theorem~2.5]{BG}. In particular, $C$ is projective and $g$ splits.

%The statement on $\kappa$-projective modules follows from the simple fact that each $\kappa$-projective module is the directed union of a $\kappa$-directed system of projective modules, whence it is a $\kappa$-pure-epimorphic image of a projective module.
Finally, since each $\kappa$-directed colimit of projective modules is a $\kappa$-pure-epimorphic image of a direct sum of projective modules, we see that the class $\mathcal P_0$ is closed under taking $\kappa$-directed colimits. To see that $\mathcal P_0$ is $\kappa$-accessible, observe that each projective module is the directed union of a $\kappa$-directed system of $<\kappa$-generated projective modules.
%The statement on accessibility of $\mathcal P_0$ stems from \cite[Corollary~2.36]{AR}.
\end{proof}

Another consequence of Lemma~\ref{l:pureepi} is the following closure property of the class $\mathcal D$ of all right flat Mittag-Leffler ($= \aleph_1$-projective) modules.

\begin{proposition}\label{p:FMLclosure} Let $R$ be a left coherent ring such that any (finite or infinite) intersection of finitely generated submodules of ${}_RR^{(\omega)}$ is finitely generated. Then $\mathcal D$ is closed under $\aleph_1$-pure-epimorphic images.
\end{proposition}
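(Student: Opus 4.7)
The plan is to use Lemma~\ref{l:pureepi} with $\lambda = \aleph_1$ to embed $N$ $\aleph_1$-purely into a reduced power of $M$, then to use closure of $\mathcal D$ under pure submodules to reduce the statement to showing that this reduced power lies in $\mathcal D$, and finally to invoke the ring-theoretic hypotheses to verify this.

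In detail, let $g : M \twoheadrightarrow N$ be an $\aleph_1$-pure epimorphism with $M \in \mathcal D$, and put $K = \ker g$, so that $\iota : K \hookrightarrow M$ is an $\aleph_1$-pure monomorphism with cokernel $N$. Writing $\iota$ as the directed colimit of an $\aleph_1$-directed system of morphisms between countably presented modules (by the standard smoothing argument of passing to countably presented subobjects in the arrow category), Lemma~\ref{l:pureepi} produces an $\aleph_1$-pure embedding $\nu : N \hookrightarrow M^I/\mathcal F$ for some index set $I$ and some $\aleph_1$-complete filter $\mathcal F$ on $I$. Since the class $\mathcal D$ of flat Mittag--Leffler modules is closed under pure submodules by the classical Raynaud--Gruson theorem, and an $\aleph_1$-pure embedding is in particular pure, it suffices to show that $M^I/\mathcal F \in \mathcal D$. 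The flatness of $M^I/\mathcal F$ is routine: left coherence of $R$ gives that $M^I$ is flat (products of flat right $R$-modules are flat over a left coherent ring), and a direct check (lifting relations from a finitely presented test module and zeroing out the lifts off a set in $\mathcal F$) shows that $Z_\mathcal F$ is pure in $M^I$, so $M^I/\mathcal F$ is a pure quotient of a flat module and hence flat.

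The main obstacle is verifying the Mittag--Leffler condition for $M^I/\mathcal F$, and this is where the hypothesis on intersections of finitely generated submodules of ${}_R R^{(\omega)}$ should enter decisively. Using the tensor-product criterion -- that a flat right $R$-module $N^\prime$ is Mittag--Leffler iff for every family $(L_j)_{j \in J}$ of left $R$-modules the canonical map $N^\prime \otimes_R \prod_j L_j \to \prod_j (N^\prime \otimes_R L_j)$ is injective -- one traces an element of the kernel through the quotient $M^I/\mathcal F$ to a system of countably many $R$-linear relations whose coefficient vectors live in ${}_R R^{(\omega)}$. The intersection hypothesis forces the submodule of ${}_R R^{(\omega)}$ cut out by these relations to be finitely generated, so only finitely many of them really matter; combined with $\aleph_1$-completeness of $\mathcal F$ (which allows one to pass to a single element of $\mathcal F$ on which all relevant relations hold pointwise), this reduces the required injectivity to the Mittag--Leffler property of $M$ itself, which holds by assumption. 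Putting everything together, $M^I/\mathcal F \in \mathcal D$, and hence so does its pure submodule $N$.
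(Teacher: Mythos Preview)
Your overall architecture is exactly the paper's: apply Lemma~\ref{l:pureepi} with $\lambda=\aleph_1$ to get an $\aleph_1$-pure embedding of the cokernel into a reduced power $M^I/\mathcal F$ with $\mathcal F$ $\aleph_1$-complete, and then use closure of $\mathcal D$ under pure submodules. The divergence is only in how you show $M^I/\mathcal F\in\mathcal D$.

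The paper does this structurally rather than by a direct Mittag--Leffler computation. It invokes two closure properties of $\mathcal D$ from \cite{HT}: closure under arbitrary direct products (equivalent, by \cite[Theorem~4.7]{HT}, to precisely the intersection hypothesis on ${}_RR^{(\omega)}$ you are given), and closure under directed colimits of $\aleph_1$-continuous directed systems \cite[Proposition~2.2]{HT}. Since $\mathcal F$ is $\aleph_1$-complete, the reduced power $M^I/\mathcal F$ is the colimit of the $\aleph_1$-continuous system of powers $M^U$, $U\in\mathcal F$, with the canonical (epi)morphisms between them; hence $M^I/\mathcal F\in\mathcal D$ at once.

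Your attempted direct verification of the Mittag--Leffler condition, by contrast, is not a proof as written. The passage ``one traces an element of the kernel \dots\ to a system of countably many $R$-linear relations whose coefficient vectors live in ${}_RR^{(\omega)}$; the intersection hypothesis forces the submodule cut out by these relations to be finitely generated, so only finitely many of them really matter'' contains two unexplained steps: you do not say how the countable system of relations arises from a single element of the kernel of $M^I/\mathcal F\otimes\prod L_j\to\prod(M^I/\mathcal F\otimes L_j)$, nor which intersection of finitely generated submodules of ${}_RR^{(\omega)}$ is being taken and why its finite generation lets you discard all but finitely many relations. What you are effectively trying to redo here is (a reduced-power variant of) the implication in \cite[Theorem~4.7]{HT} that the intersection condition forces products of modules in $\mathcal D$ to lie in $\mathcal D$; that argument is not short, and the sketch you give does not reconstruct it. Either cite the closure properties as the paper does, or supply the missing details of the tensor-criterion computation; as it stands, this step is a gap.
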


\begin{proof} Recall that $\mathcal D$ is always closed under pure submodules and directed colimits of $\aleph_1$-continuous directed systems, cf.\ \cite[Lemma~4.1, Proposition~2.2]{HT}. Furthermore, by \cite[Theorem~4.7]{HT}, $\mathcal D$ is closed under products precisely over rings satisfying our assumption. Let $0\to A\to B\to C\to 0$ be an $\aleph_1$-pure short exact sequence with $B\in\mathcal D$. It follows from Lemma~\ref{l:pureepi} that $C$ is a pure submodule of a~reduced power $B^I/\mathcal F$ where $\mathcal F$ is an $\aleph_1$-complete filter. Consequently, $B^I/\mathcal F$ is the directed colimit of $\aleph_1$-continuous directed system consisting of direct powers of $B$ and epimorphisms. As such $B^I/\mathcal F\in\mathcal D$, and hence also $C\in\mathcal D$.
\end{proof}

Below, we show that the additional assumption on $R$ is necessary.

\begin{example} In this example, by `countable', we mean `of cardinality $<\aleph_1$'. Consider the boolean ring $R$ consisting of all the subsets of $\omega_1$ which are either countable, or have countable complement, with the usual operations of symmetric difference and intersection. The ideal $I$ of all the countable subsets of $\omega_1$ is easily seen to be maximal, and it is not a direct summand in $R_R$. It follows that the simple module $R/I$ is not (flat) Mittag-Leffler. On the other hand, the inclusion $I\hookrightarrow R_R$ is the $\aleph_1$-directed (but not $\aleph_1$-continuous!) colimit of split inclusions $eR\hookrightarrow R_R$ where $e$ runs through countable subsets of $\omega_1$, thus $I\hookrightarrow R_R$ is $\aleph_1$-pure and $R/I$ is an $\aleph_1$-pure-epimorphic image of a free module which is not (flat) Mittag-Leffler.
\end{example}

\end{document}